\newcommand{\nc}{\newcommand}
\newenvironment{proof}{{\noindent \textbf{Proof}\,\,}}{\hspace*{\fill}$\Box$\medskip}
\newtheorem{theorem}[equation]{Theorem}
\newtheorem{proposition}[equation]{Proposition}
\newtheorem{corollary}[equation]{Corollary}
\newtheorem{conjecture}[equation]{Conjecture}
\theoremstyle{definition}
\newtheorem{definition}[equation]{Definition}
\theoremstyle{remark}
\newtheorem{example}[equation]{Example}
\newtheorem{remark}[equation]{Remark}
\nc{\fa}{{\mathfrak{a}}}
\nc{\fb}{{\mathfrak{b}}}
\nc{\fg}{{\mathfrak{g}}}
\nc{\fh}{{\mathfrak{h}}}
\nc{\fj}{{\mathfrak{j}}}
\nc{\fn}{{\mathfrak{n}}}
\nc{\fu}{{\mathfrak{u}}}
\nc{\fp}{{\mathfrak{p}}}
\nc{\fr}{{\mathfrak{r}}}
\nc{\ft}{{\mathfrak{t}}}
\nc{\fsl}{{\mathfrak{sl}}}
\nc{\fgl}{{\mathfrak{gl}}}
\nc{\hsl}{{\widehat{\mathfrak{sl}}}}
\nc{\hgl}{{\widehat{\mathfrak{gl}}}}
\nc{\hg}{{\widehat{\mathfrak{g}}}}
\nc{\fC}{{\mathfrak{C}}}
\nc{\fZ}{{\mathfrak{Z}}}
\nc{\BA}{{\mathbb{A}}}
\nc{\BC}{{\mathbb{C}}}
\nc{\BM}{{\mathbb{M}}}
\nc{\BN}{{\mathbb{N}}}
\nc{\BQ}{{\mathbb{Q}}}
\nc{\BF}{{\mathbb{F}}}
\nc{\BK}{{\mathbb{K}}}
\nc{\BP}{{\mathbb{P}}}
\nc{\BR}{{\mathbb{R}}}
\nc{\BZ}{{\mathbb{Z}}}
\nc{\CA}{{\mathcal{A}}}
\nc{\CB}{{\mathcal{B}}}
\nc{\CE}{{\mathcal{E}}}
\nc{\CF}{{\mathcal{F}}}
\nc{\CG}{{\mathcal{G}}}
\nc{\CI}{{\mathcal{I}}}
\nc{\CL}{{\mathcal{L}}}
\nc{\CM}{{\mathcal{M}}}
\nc{\CH}{{\mathcal{H}}}
\nc{\CN}{{\mathcal{N}}}
\nc{\CO}{{\mathcal{O}}}
\nc{\CP}{{\mathcal{P}}}
\nc{\CQ}{{\mathcal{Q}}}
\nc{\CR}{{\mathcal{R}}}
\nc{\CS}{{\mathcal{S}}}
\nc{\CT}{{\mathcal{T}}}
\nc{\CU}{{\mathcal{U}}}
\nc{\CV}{{\mathcal{V}}}
\nc{\CW}{{\mathcal{W}}}
\def\maxdeg{\text{max deg }}
\def\mindeg{\text{min deg }}
\def\loccit{\emph{loc. cit.}}
\def\ts{\tilde{s}}
\def\ph{\varphi}
\def\e{\varepsilon}
\def\sym{\text{Sym}}
\def\sq{\square}
\def\bsq{\blacksquare}
\def\H{\text{Hilb}}
\def\b{\textbf}
\def\tLaq{{\widetilde{\Lambda}_{q_1,q_2}}}
\def\Laq{{\Lambda_{q_1,q_2}}}
\def\tLa{{\widetilde{\Lambda}}}
\def\La{{\Lambda}}
\def\ts{{\tilde{s}}}
\def\tJ{{\widetilde{J}}}
\def\tK{{\widetilde{K}}}
\def\tH{{\widetilde{H}}}
\def\la{{\lambda}}
\def\lamu{{\lambda \backslash \mu}}
\def\UU{U_q\widehat{\fgl}_b}
\def\uu{U_q\widehat{\fgl}_1}
\def\su{U_q\widehat{\fsl}_b}
\newcommand{\hh}{{\mathbf{h}}}
\def\Id{\mathrm{Id}}
\def\stla{|\lambda\rangle}
\def\stmu{|\mu\rangle}
\def\costla{\overline{|\lambda\rangle}}
\def\costmu{\overline{|\mu\rangle}}
\def\Pic{\mathrm{Pic}}
\def\gr{{\mathrm{gr}}}
\def\ch{{\mathrm{ch}}}
\def\bsq{\blacksquare}
\begin{document}

\title[Infinitesimal change of stable basis]{\Large{\textbf{Infinitesimal change of stable basis}}}

\author[Eugene Gorsky]{Eugene Gorsky}
\address {Department of Mathematics, UC Davis, One Shields Ave \\ Davis, CA 95616, USA}
\address {International Laboratory of Representation Theory and Mathematical Physics\\
NRU-HSE, 7 Vavilova St.\\ Moscow, Russia 117312}
\email{egorskiy@@math.ucdavis.edu}

\author[Andrei Negu\cb t]{Andrei Negu\cb t}
\address{Massachusetts Institute of Technology, Mathematics Department, Cambridge, USA}
\address{Simion Stoilow Institute of Mathematics, Bucure\cb sti, Romania}
\email{andrei.negut@@gmail.com}

\thanks{The research of E.G. was partially supported by the grants DMS-1559338,  DMS-1403560 and RFBR-13-01-00755}

\maketitle
\thispagestyle{empty}

\begin{abstract}

The purpose of this note is to study the Maulik-Okounkov $K-$theoretic stable basis for the Hilbert scheme of points on the plane, which depends on a ``slope" $m \in \BR$. When $m = \frac ab$ is rational, we study the change of stable matrix from slope $m-\e$ to $m+\e$ for small $\e>0$, and conjecture that it is related to the Leclerc-Thibon conjugation in the $q-$Fock space for $\UU$. This is part of a wide framework of connections involving derived categories of quantized Hilbert schemes, modules for rational Cherednik algebras and Hecke algebras at roots of unity.

\end{abstract}

\section{Introduction} 

Maulik and Okounkov \cite{MO,MO2} developed a new paradigm for constructing interesting bases in the equivariant cohomology and $K$-theory of certain algebraic  varieties with torus actions. These are called {\bf stable bases} and can be defined for any conical symplectic resolution in the sense of \cite{BPW,BLPW}, in particular, for Nakajima quiver varieties. In this paper, we present an explicit conjectural description of the $K$--theoretic stable bases for $\H_n$, the Hilbert scheme of $n$ points on $\BC^2$. 

The definition of the stable basis involves a choice of a Hamiltonian one parameter subgroup, which is unique in this case (strictly speaking, there are two possible choices since one can invert the parameter), and a choice of $\CL\in \Pic(\H_n)\otimes (\BR \backslash \BQ) $. We abuse notation and refer to such $\CL$ as ``line bundles", though they are formal irrational multiples of actual line bundles. Since $\Pic(\H_n)$ has rank 1 with generator $\CO(1)$, we write $\CL_m$ for the line bundle associated to $m\in \BR \backslash \BQ$. The construction of \cite{MO2} produces a basis:
\begin{equation}
\label{eqn:stablebasis}
\Big\{ s^m_\la \Big\}_{\la \vdash n} \in K_{\BC^*\times \BC^*}(\H_n) \qquad \qquad \forall \ m \in \BR \backslash \BQ
\end{equation}
For $m=0$ the basis $s^m$ is expected to match the (plethystically transformed) Schur polynomial basis, and for $m=\infty$ it coincides with the (modified) Macdonald polynomial basis. Therefore, the stable basis for general $m$ can be thought of as interpolating between the bases of Schur and Macdonald polynomials. We are interested in ``walls", i.e. those:
$$
m \in \BR \qquad \text{such that} \qquad \Big\{ s^{m+\e}_\la \Big\}_{\la \vdash n} \neq \Big\{ s^{m-\e}_\la \Big\}_{\la \vdash n}
$$
Throughout this paper, $\e$ denotes a very small positive real number.  There are only discretely many walls for each fixed $n$, all expected to be of the form $m = \frac ab$ with $0< b\leq n$. The following conjecture prescribes how the stable basis changes upon crossing these walls:

\begin{conjecture}
\label{conj:a}

(see Conjecture \ref{conj:main} for the precise formulation): For $m = \frac ab$ with $\gcd(a,b)=1$:
$$
\text{the matrix taking } \quad \Big\{ s^{m+\e}_\la \Big\}_{\la \vdash n} \quad \text{ to } \quad \Big\{ s^{m-\e}_\la \Big\}_{\la \vdash n}
$$
coincides with the Leclerc-Thibon involution \cite{LT1,LT2} for $\UU$, up to conjugation by the diagonal matrix that produces the renormalization \eqref{eqn:renormalization}.
\end{conjecture}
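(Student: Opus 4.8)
The plan is to realize both sides of the claimed identity inside a single object: the Fock space $\bigoplus_n K_{\BC^*\times\BC^*}(\H_n)$, viewed as the level-one representation of the quantum toroidal algebra of $\fgl_1$ (the elliptic Hall algebra, acting by the correspondences of Schiffmann--Vasserot and Feigin--Tsymbaliuk). The geometric side, namely the stable-basis wall-crossing at slope $m=\frac ab$, and the representation-theoretic side, namely the Leclerc--Thibon involution for $\UU$, should both become visible once the toroidal action is restricted to the subalgebra generated by the root vectors of slope exactly $\frac ab$. The whole strategy rests on the expectation that this ``slope subalgebra'' is isomorphic to (a suitable completion of) $\UU$, so that the standard Fock-space apparatus for $\UU$ — its bar involution and canonical basis — is available verbatim on the geometric side.

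First I would make the wall-crossing matrix explicit through the Maulik--Okounkov R-matrix formalism: the stable envelopes for the chambers $m+\e$ and $m-\e$ differ by an invertible triangular wall-crossing operator $W_{a/b}$, and by the general theory this operator is built from the R-matrices attached to the wall, hence lies in the slope-$\frac ab$ subalgebra. The key structural input is then the identification of this subalgebra with $\UU$. I would establish this using the shuffle-algebra presentation of the toroidal algebra (Negut), which lets one write down the generators of a fixed slope and match their relations with the Drinfeld generators of $\UU$; this is also where the integer $b$ in $\UU$ enters, as the denominator of the slope.

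Second, I would transport the fixed-point basis $\{[\la]\}_{\la\vdash n}$ to the standard basis $\{\stla\}$ of the associated $b$-decorated Fock space, under which the restricted toroidal action becomes the Uglov--Leclerc--Thibon action of $\UU$. In these coordinates each $s^{m\pm\e}_\la$ is triangular with respect to $\{\stla\}$ for the dominance order refined by the slope, with explicit leading coefficients; the renormalization \eqref{eqn:renormalization} is precisely the diagonal ratio between the geometric normalization of the stable basis and the algebraic normalization of $\{\stla\}$. Triangularity of $W_{a/b}$, together with its integrality and an appropriate bar-invariance, would then force $W_{a/b}$, after the diagonal conjugation, to be \emph{the} unique bar-invariant triangular operator prescribed by the Kazhdan--Lusztig-type uniqueness theorem, i.e.\ the Leclerc--Thibon involution.

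The hard part will be the last matching: showing that the purely geometric operator $W_{a/b}$, which a priori is only an invertible triangular matrix, is genuinely anti-linear and involutive with the correct triangular shape, so that the uniqueness of the canonical basis can be invoked. I expect this bar-invariance to arise geometrically from the symmetry that simultaneously inverts the equivariant parameters and the slope, which should implement the Leclerc--Thibon bar operator on the nose, but turning this symmetry into the exact statement — and in particular pinning down the normalizations in \eqref{eqn:renormalization} — is where the difficulty concentrates. A secondary but still substantial obstacle is item one above: proving that the slope subalgebra is \emph{exactly} $\UU$ rather than merely a deformation or completion of it, since any discrepancy there would deform the bar involution and break the comparison.
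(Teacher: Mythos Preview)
The statement you are attempting to prove is a \emph{conjecture}: the paper does not contain a proof of it. What the paper does is (i) reformulate it as Conjecture~\ref{conj:sl b} and prove the two are equivalent, (ii) establish the Heisenberg piece of Conjecture~\ref{conj:sl b} unconditionally (Theorems~\ref{th: mn Pieri plus} and~\ref{th: mn Pieri minus}), and (iii) verify Conjecture~\ref{conj:main} by computer for $n\le 6$. So there is no ``paper's own proof'' to compare against; the honest target of comparison is the paper's \emph{strategy}, namely the reduction to Conjecture~\ref{conj:sl b}.

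Your outline has a structural gap that is not merely technical. The slope-$\frac ab$ subalgebra $\CA^{(m)}$ of the elliptic Hall algebra $\CA$ is \emph{known} to be a rank-one $q$-Heisenberg algebra $\uu$, not $\UU = \su \otimes \uu$. This is not an open identification problem; it is the definition used in Section~5 and in \cite{BS,SV,Shuf}. Hence your first step --- restricting the toroidal action to the slope subalgebra and expecting to find $\UU$ there --- does not go through: what one finds is only $\uu$, and the Leclerc--Thibon bar involution cannot be characterized using $\uu$ alone (the $f_i$ are essential, since products of $B_{-k}$ applied to the vacuum do not span the Fock space). The missing $\su$ generators are not visible inside $\CA$ in any known way; their existence as operators on $K$ with the required matrices \eqref{eqn:generators standard}, \eqref{eqn:generators costandard} in the two stable bases is precisely Conjecture~\ref{conj:sl b}, which the paper shows is \emph{equivalent} to the conjecture you are trying to prove. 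So the core of your plan is circular: you would need the conjecture in its form~B to execute the strategy for form~A.

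Your instinct that bar-invariance of the wall-crossing operator is the crux is correct, and the paper's partial results confirm it: Theorems~\ref{th: mn Pieri plus} and~\ref{th: mn Pieri minus} say exactly that the Heisenberg generators act by bar-conjugate formulas in $\ts^{m+\e}$ versus $\ts^{m-\e}$, which is the $\uu$-half of what is needed. What remains genuinely open --- in your proposal and in the paper --- is any geometric mechanism producing the $\su$-half.
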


To approach this conjecture, we use a principle going back to the work of Grojnowski and Nakajima, which says that one should work with $\H_n$ together for all $n\in \BN$. Namely, define:
\begin{equation}
\label{eqn:ktheory}
K = \bigoplus_{n = 0}^\infty K_{\BC^*\times \BC^*}(\H_n)
\end{equation}
Feigin-Tsymbaliuk \cite{FTs} and Schiffmann-Vasserot \cite{SV} have constructed an action of the spherical double affine Hecke algebra (DAHA) $\CA$ of type $GL_{\infty}$ on $K$, albeit each in a different language. The algebra $\CA$ has numerous $q$--Heisenberg subalgebras $\CA^{(m)}$, parametrized by rational numbers $m$. In a previous papers \cite{mnPieri,thesis}, the second named author proved that the action of $\CA^{(m)}$, written in the stable basis $s^{m}$, is given by ribbon tableau formulas akin to those studied by Lascoux, Leclerc and Thibon \cite{LLT}. We conjecture that this a special case of the following more general phenomenon.

\begin{conjecture}(see Conjecture \ref{conj:sl b} for the precise formulation):
\label{conj:b}
For $m=\frac ab$ with $\gcd(a,b)=1$:
\begin{equation}
\label{eqn:action}
\text{there exists an action } \ \UU \ \curvearrowright \ K
\end{equation}
such that:
\begin{enumerate}
\item $K$ is a level 1 vacuum module for $\UU$, isomorphic to the Fock space
\item  The subalgebra $\CA^{(m)}$ embeds into $\UU$ as the standard diagonal $q$--Heisenberg subalgebra, and this embedding intertwines its action on $K$ from \cite{FTs,mnPieri,SV} with the action \eqref{eqn:action}
\item  The bases $s^{m-\e}$ and $s^{m+\e}$ are, respectively, the standard and costandard bases for the action \eqref{eqn:action} (up to renormalization).
\end{enumerate}
\end{conjecture}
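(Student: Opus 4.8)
The plan is to construct the $\UU$--action by enriching the geometric operators that already generate the $\CA$--action on $K$, and then to pin down the two bases. Recall that the $q$--Fock space $\CF$ for $\UU$ carries a standard basis $\{\stla\}_\la$, a costandard (dual canonical) basis $\{\costla\}_\la$, and a principal diagonal Heisenberg subalgebra $\fh\subset\UU$, whose character equals the partition generating function and hence matches $\dim K$ in each degree. I would first realize a candidate action by honest operators on $K$: the Nakajima incidence correspondences between $\H_n$ and $\H_{n+1}$ already produce box--adding and box--removing operators, and decomposing these according to the $\BZ/b\BZ$--grading of $K$ by equivariant weight of the added box modulo $b$ — the residue structure singled out by the wall $m=\frac ab$ — yields candidate Chevalley generators $e_i,f_i$ for $i\in\BZ/b\BZ$. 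Together with the slope--$m$ Heisenberg $\CA^{(m)}$ playing the role of $\fh$, these are the generators of the desired $\su$--, respectively $\UU$--action.

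Second, I would establish part (2). By \cite{mnPieri,thesis}, the action of $\CA^{(m)}$ in the stable basis $s^{m-\e}_\la$ is given by ribbon--tableau formulas of LLT type, and these are precisely the bosonic formulas for the principal Heisenberg of level--$1$ $\UU$ acting on $\CF$ in its standard basis. Matching the two dictionaries — the $b$--ribbons of LLT with the Heisenberg (imaginary--root) direction along the line of slope $m$, and the content of a box modulo $b$ with the $\su$--weight — identifies $\CA^{(m)}$ with $\fh$ and intertwines the two actions. Cyclicity of $K$ under the full generating set, starting from the vacuum $[\CO_{\H_0}]=|\varnothing\rangle$, then yields part (1): the module is generated from a highest weight vacuum and, by the character coincidence above, must be the irreducible level--$1$ Fock space.

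Third, for part (3) I would use the Maulik--Okounkov characterization of the stable basis together with a geometric bar involution on $K$. The change of stable basis across the wall at $m$ is triangular with respect to the dominance order, and the two chambers $m\pm\e$ impose the two opposite triangularity--and--integrality conditions; on the $\CF$ side these are exactly the conditions characterizing the standard and the costandard bases. Identifying the bar involution coming from $\BC^*\times\BC^*$--duality on $K$ with the Leclerc--Thibon bar involution then forces $s^{m-\e}_\la\mapsto\stla$ and $s^{m+\e}_\la\mapsto\costla$, up to the diagonal renormalization, which is simultaneously the content of Conjecture~\ref{conj:a}.

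The main obstacle is the verification that the geometrically defined $e_i,f_i$ together with $\CA^{(m)}$ satisfy the full set of defining relations of $\UU$ — in particular the quantum Serre relations — directly on $K$; equivalently, that the residue--refined Nakajima correspondences close up into a quantum affine action rather than merely a Heisenberg one. A secondary difficulty, on which part (3) rests, is the precise identification of the geometric bar involution with the Leclerc--Thibon involution: one must show that the wall--crossing matrix is not only triangular and bar--invariant but carries the specific unitriangular normalization of the LT map, which is exactly where the root--of--unity specialization attached to $m=\frac ab$, and the link to Hecke and Cherednik algebras, should enter decisively.
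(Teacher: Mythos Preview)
The statement in question is a \emph{conjecture}, and the paper does not prove it. What the paper does prove is the equivalence of this conjecture (in its precise form, Conjecture~\ref{conj:sl b}) with Conjecture~\ref{conj:main} on the Leclerc--Thibon involution; see the short argument following Conjecture~\ref{conj:sl b}. That proof is bookkeeping: one assumes one conjecture, uses Theorems~\ref{th: mn Pieri plus} and~\ref{th: mn Pieri minus} on the Heisenberg action in the two stable bases, and invokes Remark~\ref{rem: matrices define bases} to conclude the other. There is no direct construction of the $\su$--generators in the paper.

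Your proposal, by contrast, attempts a direct proof of the conjecture itself, and you correctly flag the two places where it is incomplete. The first obstacle you name --- verifying the quantum Serre relations for the residue-refined box-adding/removing operators --- is the heart of the matter, and it is not a technicality: there is no evident geometric decomposition of the Nakajima incidence correspondence on $\H_n\times\H_{n+1}$ according to the content of the added box modulo $b$. The residue is a combinatorial label on torus fixed points, not a component of the correspondence, so your candidate $e_i,f_i$ are not defined as honest $K$--theory classes without further input. The second obstacle --- identifying a geometric bar involution with the Leclerc--Thibon one and checking the precise unitriangular normalization --- is exactly Conjecture~\ref{conj:main}, which the paper leaves open (verified only for $n\le 6$ by computer). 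So your part~(3) is circular: it assumes what the paper is trying to establish.

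In short, your outline is a reasonable sketch of what a proof \emph{would} have to do, and the paper's strategy is compatible with it (the Heisenberg piece you describe is precisely Theorems~\ref{th: mn Pieri plus}--\ref{th: mn Pieri minus}), but neither you nor the paper closes the gap on the $\su$--generators or the bar involution. The paper is explicit that this remains conjectural.
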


\noindent We expect that the above ``slope $m$ action'' of $\UU$ on Fock space has interesting algebraic, geometric and combinatorial meaning, generalizing the recent results about the ``slope $m$ action'' of $\CA^{(m)}$ \cite{BGLX,GN,N}. We support the conjectures with the following results.

\begin{theorem}
Suppose that $\gcd(a,b)=\gcd(a',b)=1$. Then the actions of $\CA^{(\frac ab)}$ and of $\CA^{(\frac{a'}{b})}$ on $K$ are conjugate to each other by the transition matrix between the bases $s^{\frac ab}$ and $s^{\frac{a'}{b}}$.
\end{theorem}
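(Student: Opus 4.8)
The plan is to reduce the statement to a single identity between matrix coefficients and then read that identity off from the explicit ribbon formulas of \cite{mnPieri,thesis}. Let $\rho_m$ denote the action of $\CA^{(m)}$ on $K$, let $P^{(m)}_k$ ($k\in\BZ\setminus\{0\}$) be the Heisenberg generators of $\CA^{(m)}$, and let $T\colon K\to K$ be the transition operator defined by $T\,s^{a/b}_\la=s^{a'/b}_\la$ for all $\la$. Since $\gcd(a,b)=\gcd(a',b)=1$, the slopes $a/b$ and $a'/b$ correspond to the primitive vectors $(b,a)$ and $(b,a')$, which lie in a single $\slz$--orbit; the corresponding element of $\slz$ carries $\CA^{(a/b)}$ onto $\CA^{(a'/b)}$ and matches generators $P^{(a/b)}_k\leftrightarrow P^{(a'/b)}_k$ (up to the scalar from the $\slz$--cocycle on $\CA$). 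Under this identification, the assertion that the two actions are conjugate by $T$ is equivalent to the purely linear-algebraic fact that an operator with a given matrix in the basis $s^{a/b}$ has, after conjugation by $T$, the very same matrix in the basis $s^{a'/b}$. Thus it suffices to prove, for all partitions $\la,\mu$ and all $k$,
$$\big\langle s^{a/b}_\la,\ \rho_{a/b}\big(P^{(a/b)}_k\big)\,s^{a/b}_\mu\big\rangle=\big\langle s^{a'/b}_\la,\ \rho_{a'/b}\big(P^{(a'/b)}_k\big)\,s^{a'/b}_\mu\big\rangle .$$

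The required matrix coefficients are exactly what \cite{mnPieri,thesis} computes: in the stable basis $s^m$, the generators of $\CA^{(m)}$ act by Lascoux--Leclerc--Thibon ribbon-tableau formulas, so the coefficient above vanishes unless $\la$ is obtained from $\mu$ by adding a strip of $b$-ribbons, in which case it equals an explicit monomial in the equivariant parameters $q_1,q_2$ weighted by the spin of that strip. First I would write these formulas out for the two slopes $a/b$ and $a'/b$ side by side.

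The heart of the argument, and the step I expect to be the main obstacle, is to check that these formulas depend on the slope \emph{only} through its denominator $b$. The ribbon size $b$, the set of admissible strips, and the arm/leg/spin statistics feeding the $q_1,q_2$--weights are all governed by $b$ alone; the numerator $a$ should enter only through an overall scalar attached to each generator — precisely the normalization that the matching $P^{(a/b)}_k\leftrightarrow P^{(a'/b)}_k$ already absorbs — and not through the partition-dependence of the coefficients. The one genuinely computational task is to track every occurrence of the slope in the closed formula of \cite{mnPieri,thesis} and confirm that the residual $a$--dependence is exactly this rescaling. Once that is verified the two matrices coincide, and the reduction of the first paragraph gives $\rho_{a'/b}\big(P^{(a'/b)}_k\big)=T\,\rho_{a/b}\big(P^{(a/b)}_k\big)\,T^{-1}$ for every $k$, hence the conjugacy of the two subalgebra actions.

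This denominator-only dependence is precisely what Conjecture \ref{conj:b} predicts: under the conjectural slope-$a/b$ action of $\UU$, the subalgebra $\CA^{(a/b)}$ should be the principal (imaginary-root) Heisenberg, and replacing $a$ by $a'$ amounts to rotating the affine Dynkin diagram $A^{(1)}_{b-1}$ underlying $\su$, an automorphism that fixes the imaginary root $\delta$ and so acts trivially on this Heisenberg. The proof itself, however, needs only the already-established ribbon formulas of \cite{mnPieri,thesis}, so it is unconditional even though its conceptual meaning is conjectural.
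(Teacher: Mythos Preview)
Your approach is correct and coincides with the paper's: both rest on the ribbon formulas of \cite{mnPieri} (stated here as Theorems~\ref{th: mn Pieri plus} and~\ref{th: mn Pieri minus}), which assert that in the \emph{renormalized} stable basis $\ts^{m\pm\e}$ of \eqref{eqn:renormalization} the Heisenberg action of $\CA^{(m)}$ is given by the LLT formulas \eqref{eqn: heisenberg generators standard}, \eqref{eqn: heisenberg generators costandard}, which visibly depend only on the denominator $b$. One correction to your bookkeeping: the numerator $a$ does not enter as an overall scalar on each generator $P_k^{(m)}$, but rather through the diagonal basis renormalization \eqref{eqn:renormalization}, whose exponents $\#_j^i$ depend on the fractional parts $\{aj/b\}$ and on the shape of each $b$-ribbon in $\la$; this $a$-dependence is therefore absorbed into the transition matrix $T$ (taken between renormalized bases) rather than into the algebra side, so the ``computational task'' you flag is exactly the verification that \eqref{eqn:renormalization} strips off all $a$-dependence from the formulas of \cite{mnPieri}.
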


\begin{theorem}
Conjectures \ref{conj:a} and \ref{conj:b} are equivalent. 
\end{theorem}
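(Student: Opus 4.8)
The bridge between the two conjectures is the semilinear bar involution on the $q$--Fock space $\CF$ of $\UU$, which is the Leclerc--Thibon involution \cite{LT1,LT2}. Recall that $\CF$ has a standard monomial basis $\{\stla\}_{\la}$ indexed by partitions, that the bar involution sends $\stla$ to the costandard vector $\costla$, and that writing $\costla = \sum_\mu c_{\mu\la}(q)\, \stmu$ produces exactly the matrix of the Leclerc--Thibon involution. Thus the transition matrix expressing the costandard basis in terms of the standard basis \emph{is} this involution, by construction. The plan is to show that, modulo a shared background construction, both conjectures reduce to the single assertion that $\{s^{m+\e}_\la\}$ is the bar image of $\{s^{m-\e}_\la\}$ up to the diagonal renormalization of Conjecture \ref{conj:a}.

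First I would build the candidate action common to both conjectures. Fixing the renormalization, let $\phi\colon \CF \to K$ be the linear isomorphism sending $\stla$ to the appropriately rescaled $s^{m-\e}_\la$, and transport the $\UU$--module structure of $\CF$ along $\phi$. By construction this realizes $K$ as a level--$1$ vacuum module (condition (1)) with $s^{m-\e}$ as the standard basis (the first half of condition (3)). To obtain condition (2), I would invoke the ribbon--tableau description of \cite{mnPieri,thesis}: the action of $\CA^{(m)}$ in the stable basis $s^{m-\e}$ is given by the Lascoux--Leclerc--Thibon formulas \cite{LLT}, which are precisely the formulas for the diagonal $q$--Heisenberg subalgebra of $\UU$ acting on the standard basis of $\CF$. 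Under $\phi$ this identifies the geometric $\CA^{(m)}$ with the diagonal Heisenberg of the transported action and intertwines the action of \cite{FTs,SV} with \eqref{eqn:action}.

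With this candidate action in hand, both implications become transparent. Since conditions (1), (2) and the first half of (3) hold for the transported action independently of either conjecture, Conjecture \ref{conj:b} is equivalent to its remaining clause: that $s^{m+\e}$ is the costandard basis up to renormalization. But the costandard basis is $\phi(\costla)$, so this clause says that the transition matrix from $\{s^{m+\e}_\la\}$ to $\{s^{m-\e}_\la\}$ equals the (renormalized) costandard--to--standard transition matrix, which is the (renormalized) Leclerc--Thibon involution --- precisely Conjecture \ref{conj:a}. Reading the identification in both directions yields the equivalence. I expect the main obstacle to lie entirely in the background construction of the previous paragraph: verifying that a single choice of renormalization simultaneously matches the ribbon formulas to the standard Fock--space normalization of the Heisenberg and is compatible with the renormalization governing the costandard side, and confirming that the ribbon formulas genuinely pin down the $m-\e$ side rather than the $m+\e$ side. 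The preceding Theorem, which identifies the algebras $\CA^{(\frac ab)}$ for fixed $b$ via the transition matrices between their stable bases, should supply the consistency needed to exclude such a mismatch.
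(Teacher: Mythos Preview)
Your approach is essentially the paper's: transport the Fock space action along an identification of the standard basis with one of the two renormalized stable bases, so that both conjectures reduce to whether the other stable basis is costandard. The one substantive slip is the orientation you flagged yourself. In the paper, Theorem~\ref{th: mn Pieri plus} shows that the Heisenberg $\CA^{(m)}$ acts in $\ts^{m+\e}$ by the \emph{standard} formulas \eqref{eqn: heisenberg generators standard}, while the companion Theorem~\ref{th: mn Pieri minus} shows it acts in $\ts^{m-\e}$ by the \emph{costandard} formulas \eqref{eqn: heisenberg generators costandard}; this is also forced by the direction of Conjecture~\ref{conj:main}, which reads $\ts^{m-\e}_\la = \sum_\mu a^\mu_\la(q)\,\ts^{m+\e}_\mu$ and hence matches $\ts^{m+\e}\leftrightarrow\stla$, $\ts^{m-\e}\leftrightarrow\costla$. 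So your isomorphism $\phi$ should send $\stla \mapsto \ts^{m+\e}_\la$, not $\ts^{m-\e}_\la$; once you make that swap the argument goes through exactly as you wrote. The paper's proof also invokes Theorem~\ref{th: mn Pieri minus} together with Remark~\ref{rem: matrices define bases} in the $B\Rightarrow A$ direction, because Conjecture~\ref{conj:sl b} only prescribes the $f_i$--matrices and one needs the Heisenberg side as well to pin down both bases uniquely; you should cite that ingredient rather than the preceding Theorem about conjugacy of $\CA^{(a/b)}$ and $\CA^{(a'/b)}$, which is not actually used here.
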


\noindent Conjecture \ref{conj:a} was verified for $n\le 6$ and all rational slopes $m=\frac ab$ by explicit computer calculations.  Note that by \eqref{eqn: nabla}, it is sufficient to check slopes $m\in [0,1)$ and by Proposition \ref{prop: wall simple} one can assume $b\le n(n-1)$. Therefore, one has finitely many slopes to check for each $n$.

%Finally, we discuss possible connections of these conjectures to the representation theory of rational Cherednik algebra.

\section*{Acknowledgements}

We would like to thank Davesh Maulik and Andrei Okounkov, without whom this paper would not have been written. We also thank Mikhail Bershtein, Roman Bezrukavnikov, Pavel Etingof, Ivan Losev, Raphael Rouquier, Peng Shan, Andrey Smirnov and Changjian Su for useful discussions. 

\section{Symmetric functions and Hilbert schemes}
\label{sec:def}

\subsection{} Much of the present paper is concerned with the ring of symmetric functions in infinitely many variables $x_1,x_2,...$:
\begin{equation}
\label{eqn:symfunc}
\La = \BZ[x_1,x_2,...]^{\sym}
\end{equation}
There are a number of generating sets of \eqref{eqn:symfunc}, perhaps the most fundamental being the monomial symmetric functions:
$$
m_\lambda = \sym \left( x_1^{\lambda_1}x_2^{\lambda_2}... \right)
$$
where $\lambda = (\lambda_1 \geq \lambda_2 \geq ...) $ goes over all partitions of natural numbers. Particular instances of monomial symmetric functions are the power sum functions:
$$
p_k = m_{(k)} = x_1^k + x_2^k + ...
$$
and the elementary symmetric functions:
$$
e_k = m_{(1,1,...,1)} = \sum_{i_1<...<i_k} x_{i_1}... x_{i_k}
$$
As a ring, $\La$ is generated by the elementary symmetric functions:
$$
\La = \BZ[e_1,e_2,...] 
$$
and is generated by power sum functions upon tensoring with $\BQ$:
$$
\tLa := \La \bigotimes_\BZ \BQ = \BQ[p_1,p_2,...]
$$
Additive generators are always indexed by partitions $\la$:
$$
\La = \BZ[e_\la]_{\la \text{ partition}} \ \qquad \text{where} \quad e_\la = e_{\lambda_1}e_{\lambda_2}...
$$
and:
$$
\tLa = \BQ[p_\la]_{\la \text{ partition}} \qquad \text{where} \quad p_\la = p_{\lambda_1}p_{\lambda_2}...
$$
A symmetric function is called \b{integral} if it lies in the image of $\La \hookrightarrow \tLa$. A basis of $\tLa$ is called integral if it consists only of such functions.

\subsection{} \label{sub:young}

There is a one-to-one correspondence between partitions and Young diagrams, the latter being stacks of $1\times 1$ boxes placed in the corner of the first quadrant. For example, the Young diagram:

\begin{picture}(100,160)(-140,-15)
\label{fig}

\put(17,17){$1$}
\put(15,57){$q_2$}
\put(17,97){$q_2^2$}
\put(57,17){$q_1$}
\put(53,57){$q_1q_2$}
\put(97,17){$q_1^2$}
\put(92,57){$q_1^2q_2$}
\put(137,17){$q_1^3$}

\put(0,0){\line(1,0){160}}
\put(0,40){\line(1,0){160}}
\put(0,80){\line(1,0){120}}
\put(0,120){\line(1,0){40}}

\put(0,0){\line(0,1){120}}
\put(40,0){\line(0,1){120}}
\put(80,0){\line(0,1){80}}
\put(120,0){\line(0,1){80}}
\put(160,0){\line(0,1){40}}

\put(65,-20){\mbox{Figure 1}}

\end{picture}

\text{}\\
represents the partition $(4,3,1)$, because it has 4 boxes on the first row, 3 boxes on the second row, and 1 box on the third row. The monomials displayed in Figure 1 are called the \textbf{weights} of the boxes they are in, and are defined by the formula:
\begin{equation}
\label{eqn:weight}
\chi_\square = q_1^{x} q_2^{y}
\end{equation}
where $(x,y)$ are the coordinates of the southwest corner of the box in question. We call the integer:
\begin{equation}
\label{eqn:content}
c_\square = x-y
\end{equation}
the \textbf{content} of the box, and note that $c_\sq$ is constant across diagonals. Finally, to every box in a Young diagram we may associate its {\bf arm--length} and {\bf leg--length}:
$$
a(\square) \text{ and } l(\square) \in \BZ_{\geq 0}
$$
These numbers count the distance between the box $\sq$ and the right and top borders of the partition, respectively. For example, the box of weight $q_2$ in Figure 1 has $a(\sq) = 2$ and $l(\sq)=1$. We will write:
\begin{equation}
\label{eqn:o}
c_\la = \sum_{\sq \in \la} c_\sq \qquad \qquad \qquad \chi_\la = \prod_{\sq \in \la} \chi_\sq
\end{equation}
We write $\mu \leq \lambda$ if the Young diagram of $\mu$ is completely contained in that of $\lambda$, and call $\lamu$ a {\bf skew Young diagram}. If such a skew diagram is a connected set of $b$ boxes which contains no $2 \times 2$ squares, we call it a $b$--{\bf ribbon}. Note that the contents of the boxes of a $b$--ribbon $R$ are consecutive integers. Set:
$$
{\hh}(\text{ribbon }R) = \max_{\sq, \bsq \in R} y(\sq) - y(\bsq)
$$
A skew diagram $S$ is called a {\bf horizontal} $k$--{\bf strip} of $b$--ribbons if it can be tiled with $k$ such ribbons $R_1,...,R_k$ in such a way that the the northwestern most box of $R_{i}$ does not lie below a box of $R_j$ for any $1\leq j \neq i \leq k$. Note that such a tiling is always unique. We set:
$$
{\hh}(\text{strip }S) = \hh(R_1) + ... + \hh(R_k)
$$
The $b$--{\bf core} of a partition $\lambda$ is defined as the minimal partition which can be obtained by removing $b$--ribbons from $\lambda$. It is well known that the $b$--core does not depend on the set of ribbons we choose to remove, as long as this set is maximal.

%Moreover, a Young diagram has \textbf{inner and outer corners}: the example in Figure 1 has 4 inner corners (of weights $q_2^3,q_1q_2^2,q_1^3q_2,q_1^4$) and 3 outer corners (of weights $q_1q_2^3,q_1^3q_2^{2},q_1^4q_2$). 

\subsection{} We will now extend our field constants, and work instead with the rings:
$$
\La_{q_1,q_2} = \La \bigotimes_\BZ \BZ[q^{\pm 1}_1,q^{\pm 1}_2] = \BZ[q^{\pm 1}_1,q^{\pm 1}_2][x_1,x_2,...]^{\sym}
$$
$$
\tLa_{q_1,q_2} = \tLa \bigotimes_\BQ \BQ(q_1,q_2) = \BQ(q_1,q_2)[x_1,x_2,...]^{\sym}
$$
The parameters $q_1$ and $q_2$ are normally denoted by $q$ and $t^{-1}$ in Macdonald polynomial theory. We choose to change the notation here, so as to not conflict with that of $q-$Fock spaces. Since the Macdonald inner product respects the degree of symmetric polynomials and the Hopf algebra structure of $\tLaq$, it is uniquely determined by the pairing of $p_k$ with itself:
\begin{equation}
\label{eqn:pair0}
\langle \cdot , \cdot \rangle_0 : \tLa_{q_1,q_2} \bigotimes_{\BQ(q_1,q_2)} \tLa_{q_1,q_2} \longrightarrow \BQ(q_1,q_2)
\end{equation}
$$
\langle p_k,p_k \rangle_0 = k \cdot \frac {1-q_1^{k}}{1-q_2^{-k}}
$$
Macdonald polyomials $\{P_\lambda\}_{\la \text{ partition}}$ are the only orthogonal basis of $\tLaq$:
$$
\langle P_\lambda, P_\mu \rangle_0 = 0 \qquad \forall \ \la \neq \mu
$$
which is unitriangular in the basis of monomial symmetric functions:
\begin{equation}
\label{eqn:macdonald}
P_\lambda = m_\lambda + \sum_{\mu \lhd \lambda} m_\mu c^\mu_\lambda
\end{equation}
for certain coefficients $c_\lambda^\mu \in \BQ(q_1,q_2)$. In the above formula, recall that the \textbf{dominance ordering} on partitions of the same size $|\mu|=|\lambda|$ is:
\begin{equation}
\label{eqn:dom}
\mu \unlhd \lambda \qquad \text{if} \qquad \mu_1+...+\mu_i \leq \lambda_1+...+\lambda_i \quad \forall i
\end{equation}
An element of $\tLaq$ is called integral if it lies in the image of $\Laq \hookrightarrow \tLaq$. Because the coefficients $c_\la^\mu$ of \eqref{eqn:macdonald} are rational functions in general, Macdonald polynomials are not integral. However, the following renormalization:
\begin{equation}
\label{eqn:integralform}
\tJ_\lambda = P_\la \cdot q_2^{-|\la|} \prod_{\sq \in \la} \left(q_2^{l(\sq)+1} - q_1^{a(\sq)}\right)
\end{equation}
is integral. It is well-known that the pairing of $\tJ_\la$ with itself is given by:
\begin{equation}
\label{eqn:integralformpair}
\langle \tJ_\lambda, \tJ_\mu \rangle_0 = \delta_\mu^\la \cdot q_2^{-|\la|} \prod_{\sq \in \la} \left(q_2^{l(\sq)+1}  - q_1^{a(\sq)}\right)\left(q_2^{l(\sq)} - q_1^{a(\sq)+1} \right)
\end{equation}

\section{Fock representation and global canonical bases}
\label{sec:fock}

\subsection{}

We recall the explicit construction of the action of the quantum affine algebra $\UU$ on the $q$--Fock space $\Lambda_q$, following Kashiwara-Miwa-Stern \cite{KMS} and Leclerc-Thibon  \cite{LeclercLec,LT1,LT2}. The {\bf standard basis} in $\Lambda_q$ will be denoted by $\stla$, so we define:
$$
\Lambda_q = \bigoplus_{\la \text{ partition}} \BQ(q) \cdot |\la \rangle
$$
Consider partitions $\lambda, \mu$ such that the former is obtained from the latter by adding an $i$--{\bf node}, by which we mean a box $\bsq$ with content $\equiv i$ modulo $b$. We call this box a {\bf removable $i$--node} for $\lambda$ and an {\bf indent $i$--node} for $\mu$. Let $I_i(\mu)$ be the number of indent $i$--nodes of $\mu$, $R_i(\lambda)$ the number of removable $i$--nodes of $\lambda$, $I^{l}_{i} (\lambda, \mu)$ (resp. $R^{l}_{i}(\lambda, \mu)$) the number of indent $i$--nodes (resp. of removable $i$--nodes) situated to the left of $\bsq$, and similarly, let $I^{r}_{i} (\lambda, \mu)$ and $R^{r}_{i}(\lambda, \mu)$be the corresponding numbers of nodes located on the right of $\bsq$. Set: 
$$
N_i(\lambda) = I_i(\lambda)-R_i(\lambda)
$$
for all partitions $\lambda$, as well as:
$$
N^{l}_{i}(\lambda, \mu) = I^{l}_{i} (\lambda, \mu)- R^{l}_{i}(\lambda, \mu)
$$
$$
N^{r}_{i}(\lambda, \mu) = I^{r}_{i} (\lambda, \mu)- R^{r}_{i}(\lambda, \mu)
$$
for all pairs $\lambda,\mu$ such that $\lambda \backslash \mu$ consists of an $i$--node $\bsq$. Then the following assignments:
\begin{equation}
\label{eqn:generators standard}
e_i\stla = \sum^{\la / \mu \text{ is}}_{\text{an }i\text{--node}} q^{N^l_{i}(\lambda,\mu)}\stmu, \qquad \qquad f_i\stmu = \sum^{\la / \mu \text{ is}}_{\text{an }i\text{--node}} q^{N^r_{i}(\lambda,\mu)}\stla,  
\end{equation}
\begin{equation}
\label{eqn:cartan generators standard}
q^{h_i}\stla = q^{N_i(\lambda)}\stla, \qquad \qquad q^D\stla = q^{−N_0(\lambda)}\stla
\end{equation}
give rise to an action of $\su$ on the Fock space $\Lambda_q$. One wishes to enhance \eqref{eqn:generators standard}--\eqref{eqn:cartan generators standard} to an action of:
$$
\UU = \su \otimes \uu
$$
on the Fock space, where the $q-$Heisenberg algebra is:
$$
\uu = \BQ(q) \left \langle ..., B_{-2}, B_{-1}, B_1, B_2, ... \right \rangle \Big/ [B_k,B_l] - k \delta_{k+l}^0 [b]_{q^k} 
$$
where $[b]_x = 1+x+...+x^{b-1}$. In other words, we must define an action of the generators $B_k$ on Fock space which commutes with the one prescribed by formulas \eqref{eqn:generators standard}--\eqref{eqn:cartan generators standard}. To do so, let us consider the following alternative system of generators:
$$
\sum_{k=0}^{\infty} V_{\pm k}z^{k} = \exp\left(\sum_{k=1}^{\infty}\frac{B_{\mp k}z^k}{k}\right)
$$
In \cite{LLT}, the authors introduced the following action $\uu \curvearrowright \Lambda_q$ and showed that it commutes with the action of $\su$ defined in \eqref{eqn:generators standard}--\eqref{eqn:cartan generators standard}, thus giving rise to an action $\UU \curvearrowright \Lambda_q$:
\begin{equation}
\label{eqn: heisenberg generators standard}
V_{k} \stmu = \sum_{\la}(-q)^{-\hh(\la/\mu)}\stla, \qquad \qquad V_{-k} \stla = \sum_{\mu}(-q)^{-\hh(\la/\mu)}\stmu
\end{equation}
where the sums go over all horizontal $k$--strips of $b$--ribbons $\la/\mu$, as in Subsection \ref{sub:young}. 

\subsection{}

As observed by Leclerc and Thibon, there is a unique involution of the Fock space $\Lambda_q$ satisfying:
\begin{enumerate}
\item Semilinearity: $\overline{a(q)x+b(q)y}=a(q^{-1})\overline{x}+b(q^{-1})\overline{y}$
\item Identity on vacuum: $\overline{|\emptyset \rangle}=| \emptyset \rangle$
\item Invariance under the creation operators: $\overline{f_{i}v}=f_i\overline{v},\ \overline{B_{-k}v}=B_{-k}\overline{v}.$
\end{enumerate}
Indeed, products of $f_i$ and $B_{-k}$ applied to the vacuum span the Fock space, and this implies uniqueness. Note that  $\overline{V_{k}v}=V_{k}\overline{v}$
since the operators $V_{k}$ are monomials in the generators $B_{-k}$ with constant coefficients.
Define the matrix $A_b(q)=(a_\la^\mu(q))$ by the equation
\begin{equation}
\label{eqn:ltinvolution}
\costla = \sum_{\mu} a_\la^\mu(q) \cdot \stmu.
\end{equation}
Clearly, $A_b(q)A_b(q^{-1})=\Id$ by semilinearity $(1)$.

\begin{theorem}(\cite{LT1,LT2})
\label{th:LT main}
The matrix $A_b(q)$ has the following properties:
\begin{itemize}
\item[a)] $a_\la^\mu(q)\in \BZ[q, q^{-1}]$
\item[b)] $a_\la^\mu(q)=0$ unless $|\lambda|=|\mu|$, $\mu \unlhd \la$ and $\lambda,\mu$ have the same $b$--core
\item[c)] $a^\la_\la(q) = 1$
\item[d)] $a_\la^\mu(q) = a_{\mu'}^{\la'}(q)$.
\end{itemize}
\end{theorem}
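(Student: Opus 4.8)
The plan is to prove the four properties in increasing order of difficulty, deducing the size/core constraints in (b) from the grading of $\Lambda_q$, the triangularity in (b) together with (a) and (c) from the existence of bar-invariant, dominance-unitriangular, integral vectors, and (d) from a transpose symmetry of the Fock space.

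First I would record that $\overline{(\cdot)}$ is homogeneous for the $\BZ_{\geq 0}^b$-grading of $\Lambda_q$ in which $\stla$ has degree equal to the vector $(c_0(\la),\dots,c_{b-1}(\la))$ of box-contents modulo $b$. Indeed the vacuum has degree $0$, the operator $f_i$ is homogeneous of degree $\mathbf{e}_i$ and each $B_{-k}$ is homogeneous of degree $k\cdot(1,\dots,1)$; since the vectors obtained by applying words in the $f_i$ and $B_{-k}$ to $|\emptyset\rangle$ are bar-invariant and span $\Lambda_q$, every homogeneous element has bar-image of the same degree. Hence $\costla$ is supported on $\stmu$ with the same content vector, and as this vector records both $|\la|$ and the $\widehat{\fsl}_b$-weight (equivalently, by the bijection between $b$-cores and the relevant weight lattice, the $b$-core), the first two vanishing conditions of (b) follow.

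The core of the argument is (a), (c) and the dominance part of (b). Here I would construct, for each $\la$, a bar-invariant ``approximation'' $A(\la)\in\Lambda_q$ by applying to $|\emptyset\rangle$ a monomial in the divided powers $f_i^{(k)}$, the sequence $(i,k)$ being read off the diagram of $\la$ (by ladders). Each $f_i^{(k)}$ is bar-invariant and, by \eqref{eqn:generators standard}, acts on the standard basis with coefficients in $\BZ[q,q^{-1}]$ (divided powers contribute $q$-binomials, again Laurent polynomials), so $A(\la)$ is bar-invariant with integral coordinates. The key combinatorial lemma is that this is unitriangular for dominance:
\[
A(\la) = \stla + \sum_{\mu \lhd \la} \gamma_\la^\mu(q)\,\stmu, \qquad \gamma_\la^\mu(q)\in\BZ[q,q^{-1}].
\]
Granting it, I invert this unitriangular integral change of basis to write $\stla=\sum_{\mu \unlhd \la}\beta_\la^\mu(q)A(\mu)$ with $\beta_\la^\mu\in\BZ[q,q^{-1}]$ and $\beta_\la^\la=1$; applying the semilinear, $A$-fixing involution gives $\costla=\sum_{\mu\unlhd\la}\beta_\la^\mu(q^{-1})A(\mu)$, and re-expanding the $A(\mu)$ yields $a_\la^\mu\in\BZ[q,q^{-1}]$, supported on $\mu\unlhd\la$, with $a_\la^\la=1$. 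This is (a), (c) and the ordering in (b). I expect the unitriangularity lemma to be the main obstacle: one must verify that each $f_i^{(k)}$ adds the intended boxes with dominance-leading partition as claimed, controlling the lower terms against the $q^{N^r}$ weights of the addable $i$-nodes.

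Finally, for (d) I would use the transpose symmetry $\tau:\stla\mapsto|\la'\rangle$. Reflecting a Young diagram negates contents, so it sends $i$-nodes to $(-i)$-nodes, exchanges the left/right counts $N^l\leftrightarrow N^r$, exchanges arm and leg, and reverses dominance; consequently $\tau$ intertwines the $\UU$-action with its composition with the diagram automorphism $i\mapsto -i$ and the substitution $q\mapsto q^{-1}$. As $\tau$ fixes $|\emptyset\rangle$ and carries the creation operators to creation operators of the twisted structure, the uniqueness characterization of the bar involution forces $\tau$ to be compatible with $\overline{(\cdot)}$; translating this into matrix entries and using $A_b(q)A_b(q^{-1})=\Id$ gives $a_\la^\mu(q)=a_{\mu'}^{\la'}(q)$. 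The delicate point here is the behaviour of the Heisenberg part: a horizontal strip of $b$-ribbons becomes a vertical one under reflection, and I would need to check that the resulting change in $\hh$ and the attendant sign are exactly those absorbed by the $q\mapsto q^{-1}$ twist, so that the claimed symmetry is not spoiled.
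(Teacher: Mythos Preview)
The paper does not give its own proof of this theorem: it is stated with attribution to \cite{LT1,LT2} and used as a black box. So there is no in-paper argument to compare against; the relevant comparison is with the cited sources.

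Your plan is essentially the one carried out in those references. The grading argument for the size and $b$--core constraints is exactly how one isolates the blocks in \cite{LT1}. Your construction of bar-invariant, dominance-unitriangular, integral vectors $A(\la)$ by applying divided powers $f_i^{(k)}$ to the vacuum along the ladder sequence of $\la$ is precisely the Leclerc--Thibon ``ladder method''; the unitriangularity lemma you flag as the main obstacle is their key combinatorial result, and your deduction of (a), (c) and the triangularity in (b) from it is the standard one. For (d), the transpose-plus-diagram-automorphism symmetry is again the mechanism used in \cite{LT2}; your worry about the Heisenberg piece is legitimate but resolves: under $\la\mapsto\la'$ a $b$--ribbon of height $h$ goes to one of height $b-1-h$, and horizontal strips go to vertical strips, which one checks are governed by $(-q)^{+\hh}$ rather than $(-q)^{-\hh}$, so that the net effect on $V_k$ is exactly the $q\mapsto q^{-1}$ twist you need.

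In short: your proposal is correct and matches the original proofs in the literature that the paper is citing; the paper itself offers nothing to compare with.
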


\begin{example}
\label{ex:a2}
Let us compute the matrix $A_2(q)$ in degree $2$. We have $f_0| \emptyset \rangle=|(1)\rangle$ and
$$
f_1f_0| \emptyset \rangle=f_1|(1)\rangle=|(2)\rangle+q|(1,1)\rangle, \qquad \text{while} \qquad V_{1}|\emptyset\rangle=|(2)\rangle-q^{-1}|(1,1)\rangle
$$
By condition $(3)$, the vectors $f_1f_0| \emptyset \rangle$ and $V_{1}|\emptyset\rangle$ should be preserved by the bar-involution, so the matrix:
$$
T=\left(\begin{matrix}
1 & 1\\
q & -q^{-1}\\
\end{matrix}\right)
$$
satisfies $A_2(q)T(q^{-1})=T(q)$. We conclude that: 
$$
A_2(q)=T(q)T(q^{-1})^{-1}=\left(\begin{matrix}
1 & 0\\
q-q^{-1} & 1\\
\end{matrix}\right).
$$
\end{example}
\begin{remark}
A similar method can be used to compute the matrix $A_b(q)$ in general: using the matrices of $f_i$ and $V_{i}$ defined by \eqref{eqn:generators standard}--\eqref{eqn: heisenberg generators standard}, one can write a basis of 
bar-invariant vectors in $\Lambda_q$, write their coordinates in a matrix $T$ and obtain $A_b(q)=T(q)T(q^{-1})^{-1}$.
See \cite{LeclercLec} for further  details. Note that this approach does not explain the triangularity of $A_b(q)$.
\end{remark}

\subsection{}

We will also encounter the {\bf costandard basis} $\costla$ of $\Lambda_q$. By definition, $A_b(q)$ is the transition matrix between the standard and the costandard bases. Furthermore, the action of the creation operators in the costandard basis is given by the following equations:
\begin{equation}
\label{eqn:generators costandard}
f_i\costmu = \overline{f_i\stmu}=\overline{\sum_{\la} q^{N^r_{i}(\lambda,\mu)}\stla}=\sum_{\la} q^{-N^r_{i}(\lambda,\mu)}\costla,
\end{equation}
and similarly:
\begin{equation}
\label{eqn: heisenberg generators costandard}
V_k\costmu=\sum_{\la}(-q)^{\hh(\lamu)}\costla,
\end{equation}
where the sums over $\la$ and $\mu$ are the same as in \eqref{eqn:generators standard} and \eqref{eqn: heisenberg generators standard}.

\begin{remark}
\label{rem: matrices define bases}
Since the Fock space is an irreducible representation of $\UU$, the equations \eqref{eqn:generators standard}--\eqref{eqn: heisenberg generators standard} and \eqref{eqn:generators costandard}--\eqref{eqn: heisenberg generators costandard} define the standard and the costandard bases completely.
\end{remark}

Furthermore, \cite{LT1,LT2} define yet another basis in the Fock space called the {\em global canonical basis.}

\begin{theorem}
\label{thm:canonical}

(\cite{LT1,LT2})
There exist unique bases $G^{\pm}(\lambda)$ in $\Lambda_q$ such that:
\begin{enumerate}
\item $\overline{G^{\pm}(\lambda)}=G^{\pm}(\lambda).$
\item $G^{\pm}(\lambda)\cong |\lambda\rangle \mod q^{\pm 1}\Lambda[q^{\pm 1}]$
\end{enumerate}
\end{theorem}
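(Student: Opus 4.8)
The plan is to deduce this from the triangularity of the bar-involution matrix $A_b(q)$ together with the relation $A_b(q)A_b(q^{-1})=\Id$, via the standard positive-part extraction argument of Lusztig and Kazhdan--Lusztig. First I would observe that, by Theorem \ref{th:LT main}(b), the bar-involution preserves both the degree and the $b$-core, so it suffices to work inside a single finite block: fix $n$ and a $b$-core, and let $\lambda$ range over the partitions of $n$ with that core, partially ordered by dominance \eqref{eqn:dom}. Within such a block the set of $\mu$ with $\mu\unlhd\lambda$ is finite, which makes the induction below well-founded, and part (a) guarantees that all coefficients stay in $\BZ[q,q^{-1}]$.

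Next I would write the desired basis as $G^{+}(\lambda)=\sum_{\mu\unlhd\lambda}p_\la^\mu(q)\stmu$ and translate bar-invariance into a recursion. Expanding $\overline{G^{+}(\lambda)}$ using semilinearity and $\costmu=\sum_{\nu}a_\mu^\nu(q)\,|\nu\rangle$ yields, for each $\nu$, the identity $p_\la^\nu(q)=\sum_\mu a_\mu^\nu(q)\,p_\la^\mu(q^{-1})$, where by part (b) the sum runs over $\nu\unlhd\mu\unlhd\lambda$. Separating the term $\mu=\lambda$ (which contributes $a_\lambda^\nu(q)$, using $p_\la^\lambda=1$) and the term $\mu=\nu$ (which returns $p_\la^\nu(q^{-1})$, using $a_\nu^\nu=1$) gives
$$
p_\la^\nu(q)-p_\la^\nu(q^{-1})=a_\lambda^\nu(q)+\sum_{\nu\lhd\mu\lhd\lambda}a_\mu^\nu(q)\,p_\la^\mu(q^{-1})=:R_\nu(q).
$$
Proceeding by downward induction on $\nu$ in the dominance order, with base case $p_\la^\lambda=1$ forced by part (c), the right-hand side $R_\nu(q)$ is already determined. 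Granting that $R_\nu(q)$ is bar-antisymmetric, i.e.\ $R_\nu(q^{-1})=-R_\nu(q)$, there is a unique $p_\la^\nu(q)\in q\BZ[q]$ solving the equation, namely the strictly positive-degree part of $R_\nu(q)$. This gives simultaneously existence and uniqueness of $G^{+}(\lambda)$, and the congruence $G^{+}(\lambda)\equiv\stla \mod q\Lambda[q]$ holds by construction.

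The step I expect to be the main obstacle is verifying the bar-antisymmetry of $R_\nu(q)$ at each stage, since this is exactly what makes the positive-part extraction produce a consistent solution rather than merely a formal one. This is where the relation $A_b(q)A_b(q^{-1})=\Id$ (recorded just after \eqref{eqn:ltinvolution}) enters: the bar-involution squares to the identity, and unwinding this on the partially solved recursion forces $\overline{R_\nu}=-R_\nu$. Concretely I would prove the sharper statement that, with the coefficients $p_\la^\mu$ determined so far for $\mu\rhd\nu$, the vector $\overline{G^{+}(\lambda)}-G^{+}(\lambda)$ is bar-antisymmetric and supported strictly below $\nu$, and then feed this back into the induction. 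Finally, the basis $G^{-}(\lambda)$ is obtained by the identical argument with $q\BZ[q]$ replaced by $q^{-1}\BZ[q^{-1}]$ and positive-part extraction replaced by negative-part extraction; alternatively, it is the image of $G^{+}(\lambda)$ under the $q\leftrightarrow q^{-1}$ symmetry of the whole construction.
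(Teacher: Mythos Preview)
The paper does not give its own proof of this theorem: it is stated with a citation to \cite{LT1,LT2} and then used as a black box. So there is no ``paper's proof'' to compare against. Your argument is precisely the standard Lusztig/Kazhdan--Lusztig extraction used in those references (and in Du's formulation): reduce to a finite block via Theorem~\ref{th:LT main}(b), use unitriangularity and the involutivity $A_b(q)A_b(q^{-1})=\Id$ to see that at each step the right-hand side is bar-antisymmetric, and then take the unique positive-degree part. That is correct and is exactly how the cited papers proceed; the cleanest way to run the antisymmetry step is to first expand $\overline{|\lambda\rangle}-|\lambda\rangle$ in the already-constructed $G^{+}(\mu)$ for $\mu\lhd\lambda$, where bar-antisymmetry of the coefficients is immediate from $\overline{\,\cdot\,}^{\,2}=\Id$.

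One small correction: your ``alternatively'' for $G^{-}$ is not right as stated. The naive substitution $q\leftrightarrow q^{-1}$ fixing the standard basis does \emph{not} commute with the bar-involution (the entries $a_\lambda^\mu(q)$ are not symmetric under $q\mapsto q^{-1}$; already in Example~\ref{ex:a2} one has $a_{(1,1)}^{(2)}=q-q^{-1}$), so $G^{-}$ is not obtained from $G^{+}$ by that map. Your primary claim---repeating the identical argument with $q\BZ[q]$ replaced by $q^{-1}\BZ[q^{-1}]$---is the correct one, and is what \cite{LT1,LT2} do. (In those papers $G^{-}$ is instead related to $G^{+}$ via the transpose-of-partitions symmetry of Theorem~\ref{th:LT main}(d), not by a bare $q\leftrightarrow q^{-1}$.)
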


Consider the matrix $(d_\la^\mu(q))$ defined by the equation:
\begin{equation}
\label{eq: def d}
G^{+}(\la)=\sum_{\lambda}d_\la^\mu(q) \cdot |\mu\rangle.
\end{equation}
One can check that this matrix is lower-triangular, so $d_\la^\mu(q) =0$ unless $\mu \unlhd \la$.

\begin{example}
Let us compute the basis $G^{+}$ using Example \ref{ex:a2}. By triangularity,
$$
G^{+}(1,1)=|(1,1)\rangle,\ G^{+}(2)=|(2)\rangle +\beta(q)|(1,1)\rangle.
$$ 
The bar-invariance implies 
$\beta(q)-\beta(q^{-1})=q-q^{-1}$ which, together with condition (2) in Theorem \ref{thm:canonical}, uniquely determines $\beta(q)=q$. Therefore
$$
(d_\la^\mu(q))=\left(\begin{matrix}
1 & 0\\
q & 1\\
\end{matrix}
\right)
$$
\end{example}

%\begin{theorem}(\cite{})
%Let $G^{+}(\mu)=\sum_{\lambda}d_{\lambda,\mu}|\mu\rangle$. Then $d_{\lambda,\mu}$ denotes the $q$-multiplicity of $L({\mu'})$ in $M(\lambda')$ equipped with the radical filtration. 
%\end{theorem}

\section{Hilbert schemes and stable bases}
\label{sec:stab}

\subsection{}
\label{sub:hilb}

We consider the Hilbert scheme $\H_n$ of $n$ points in the plane. This is a smooth quasi-projective variety of dimension $2n$. It is endowed with a torus action:
\begin{equation}
\label{eqn:hilbert}
T = \BC^*_q \times \BC^*_t \curvearrowright \H_n
\end{equation}
In the above formula, $q$ and $t$ are equivariant parameters, namely the standard coordinates on rank 1 tori. We will often denote $q_1 = qt$ and $q_2 = qt^{-1}$ and think of these monomials as the torus characters acting on the coordinate lines of $\BC^2$. Fixed points of the Hilbert scheme with respect to the torus action \eqref{eqn:hilbert} are monomial ideals:
\begin{equation}
\label{eqn:fixedpoint}
I_\la = (x^{\la_1-1}, x^{\la_2-1}y,x^{\la_3-2}y^2,...) \in \H_n 
\end{equation}
for any partition $\la = (\la_1 \geq \la_2 \geq \la_3 \geq ...)$. The torus character in the tangent space to $\H_n$ at the fixed point $I_\la$ is given by the well-known formula:
\begin{equation}
\label{eqn:tangent}
T_\la \H_n = \sum_{\sq \in \la}\left( q_1^{a(\sq)}q_2^{-l(\sq)-1} + q_1^{-a(\sq)-1}q_2^{l(\sq)}\right)
\end{equation}
We will work with the equivariant $K-$theory group:
$$
K = \bigoplus_{n=0}^\infty K_{q,t}(\H_n)
$$
By definition, $K$ is the additive group generated by the classes of $\BC_q^* \times \BC_t^*$--equivariant vector bundles on Hilbert schemes $\H_n$, modulo relations imposed by exact sequences. Important elements of $K$ are the skyscraper sheaves at the torus fixed points \eqref{eqn:fixedpoint}, which we denote by the same letter as the fixed point itself:
$$
[\widetilde{I}_\la] \in K
$$
Recall the equivariant localization formula, which expresses any class $f\in K$ in terms of its restrictions to torus fixed points:
\begin{equation}
\label{eqn:loc0}
f = \sum_{\la \vdash n} \frac {f|_\la \cdot [\widetilde{I}_\la]}{[T_\la\H_n]}
\end{equation}
where in the denominator we write $[x] = 1-x^{-1}$ and extend this notation additively: $[x+y] = [x]\cdot [y]$. Because of the presence of denominators, the equality \eqref{eqn:loc0} holds in the localized $K$--theory group:
$$
\tK = K \bigotimes_{\BZ[q_1^{\pm 1}, q_2^{\pm 1}]} \BQ(q_1,q_2) 
$$
In this localization, we may renormalize the classes of fixed points:
$$
[I_\la] = \frac {[\widetilde{I}_\la]}{[T_\la\H_n]} \in \tK
$$
The restriction of a class to a fixed point is precisely its coefficient when expanded in the basis $[I_\la]$:
\begin{equation}
\label{eqn:loc}
f = \sum_{\la \vdash n} f|_\la \cdot  [I_\la]
\end{equation}

\subsection{}
\label{sub:haiman}

The well-known Bridgeland-King-Reid construction is an equivalence of derived categories, which in particular allows one to identify:
\begin{equation}
\label{eqn:bkr}
K \cong \Laq
\end{equation}
Haiman showed that the classes of fixed points correspond to \b{modified Macdonald polynomials} $\tH_\la$:
$$
[\widetilde{I}_\la] \leftrightarrow \tH_\la
$$
where $\tH_\la[X] = \tJ_\la \left[\frac {X}{1-q_2^{-k}} \right]$ is the image of \eqref{eqn:integralform} under the standard plethysm. We recall that ``plethysm" is just another word for the algebra homomorphism:
$$
\ph:\tLaq \rightarrow \tLaq \qquad \ph(p_k) = \frac {p_k}{1-q_2^{-k}}
$$
Because of this plethysm, it makes sense to study the following modification of the inner product \eqref{eqn:pair0}:
\begin{equation}
\label{eqn:pair1}
\langle \cdot , \cdot \rangle : \tLa_{q_1,q_2} \bigotimes_{\BQ(q_1,q_2)} \tLa_{q_1,q_2} \longrightarrow \BQ(q_1,q_2)
\end{equation}
$$
\left \langle f\left[\frac {X}{1-q_2^{-1}} \right], g\left[\frac {X}{1-q_2} \right] \right \rangle = \langle f,g \rangle_0
$$
which explicitly is generated by the following formula for the pairing of $p_k$ with itself:
$$
\langle p_k,p_k \rangle = k \cdot \left(1-q_1^k \right) \left(1-q_2^k \right)
$$
With respect to this inner product, \eqref{eqn:integralformpair} implies on general grounds that:
\begin{equation}
\label{eqn:modifiedpair}
\langle \tH_\la, \tH_\mu \rangle = \delta_\mu^\la \cdot (-1)^{|\la|} \prod_{\sq \in \la} \left(q_2^{l(\sq)+1}  - q_1^{a(\sq)}\right)\left(q_2^{l(\sq)} - q_1^{a(\sq)+1} \right)
\end{equation}
Meanwhile, the natural Euler form is:
\begin{equation}
\label{eqn:euler}
( \tH_\la, \tH_\mu ) = \delta_\mu^\la \cdot [T_\la \H_n] = \delta_\mu^\la \prod_{\sq \in \la} \left(1 - q_1^{-a(\sq)}q_2^{l(\sq)+1}\right)\left(1 - q_1^{a(\sq)+1}q_2^{-l(\sq)}\right)
\end{equation}
Comparing \eqref{eqn:modifiedpair} with \eqref{eqn:euler}, we conclude that $\langle f, g \rangle = (\nabla f, g)$, where the Bergeron--Garsia operator $\nabla$ is defined to be diagonal in the basis of modified Macdonald polynomials:
$$
\nabla: \Lambda_{q_1,q_2} \longrightarrow \Lambda_{q_1,q_2}, \qquad \widetilde{H}_\la \mapsto  \widetilde{H}_\la \cdot \chi_\la
$$
where $\chi_\la$ was defined in \eqref{eqn:o}. If we observe that $\chi_\la$ is nothing but the torus weight of the restriction of the line bundle $\CO(1)$ to the fixed point $\lambda$, then the operator $\nabla$ corresponds to the operator of multiplication by $\CO(1)$ under the isomorphism \eqref{eqn:bkr}.

\subsection{} 
\label{sub:stab}

In \cite{MO}, Maulik and Okounkov defined the \b{stable basis} for the cohomology of a wide class of symplectic resolutions $X$. The $K-$theoretic version of their construction has not yet been published, but the interested reader can read a brief survey in \cite{thesis}. We will review their particular construction in the case at hand $X = \H_n$:
\begin{equation}
\label{eqn:stablebasis}
\forall \ m \in \BR \backslash \BQ \quad \leadsto \quad \text{an integral basis } \{s_\la^m\}_{\la \vdash n} \in K_T(\H_n)
\end{equation}
which is triangular in terms of renormalized fixed points:
\begin{equation}
\label{eqn:triangular}
s_\la^m = \sum_{\mu \unlhd \la} \gamma_\la^\mu [I_\mu] \qquad \text{where} \qquad \gamma_\la^\la = \prod_{\sq \in \la} \left(q_2^{l(\sq)} - q_1^{a(\sq)+1}\right) 
\end{equation}
and the coefficients $\gamma_\la^\mu \in \BZ[q^{\pm 1}, t^{\pm 1}]$ have the property:
\begin{equation}
\label{eqn:small1}
\mindeg \gamma_\la^\mu(q,t) \geq n(\la) + m \cdot (c_\mu - c_\la)
\end{equation}
\begin{equation}
\label{eqn:small2}
\maxdeg \gamma_\la^\mu(q,t) \leq n(\la') + |\la| + m \cdot (c_\mu - c_\la)
\end{equation}
Recall that $n(\lambda) = \sum_{\sq \in \lambda} l(\sq)$. Here and throughout this paper, ``min deg" and ``max deg" refer to the minimal and maximal degrees of a Laurent polynomial in the variable $t$. Formulas \eqref{eqn:small1}--\eqref{eqn:small2} are arranged so that when $\la = \mu$, the leading coefficient of \eqref{eqn:triangular} forces the two inequalities to be equalities. Maulik--Okounkov claim that for any $m \in \BR \backslash \BQ$, there is a unique integral basis with properties \eqref{eqn:triangular}, \eqref{eqn:small1}, \eqref{eqn:small2}. Moreover, the basis is unchanged under small perturbations of $m$. Note that uniqueness implies:
\begin{equation}
\label{eqn: nabla}
s_\lambda^{m+1} = \frac {\nabla s_\lambda^m}{\chi_\la}
\end{equation}

\begin{remark}
\label{rem:symplectic}

Geometrically, we may think of the flow given by the rank one torus $\BC^*_t$ on $\H_n$. There is a flow line from $I_\mu$ to $I_\la$ only if $\mu \lhd \la$, and conversely, if $\mu \lhd \la$ then there exists a broken flow line: 
$$
I_\mu \rightarrow I_{\nu_1} \rightarrow ... \rightarrow I_{\nu_k} \rightarrow I_\la
$$
At each fixed point, the flow divides torus fixed tangent directions into either attracting or repelling, and this is determined by whether the power of $t$ in that tangent direction is positive or negative. The $K-$theory class: 
$$
\prod_{\sq \in \la} \left(q_2^{l(\sq)} - q_1^{a(\sq)+1}\right) \cdot | \la \rangle
$$ 
coincides with the localized structure sheaf of the attracting submanifold at $\la$, up to a monomial multiple. Then \eqref{eqn:triangular} means that we define the stable basis vector $s_\la^m$ by correcting the attracting submanifold of $\la$ with contributions that come from ``downstream" fixed points $\mu \lhd \la$.

\end{remark}

\subsection{}
\label{sub:change}

The existence and uniqueness of \eqref{eqn:stablebasis} also holds for $m\in \BQ$, but we must require either \eqref{eqn:small1} or \eqref{eqn:small2} to be a strict inequality. Fix a rational slope $m \in \BQ$. Since the stable basis is locally constant on a small punctured neighborhood of $m$, we have the two different bases:
$$
\{s^{m-\e}_\la\}_{\la \text{ partition}} \ \subset \ \La_{q_1,q_2} \ \supset \ \{s^{m+\e}_\la\}_{\la \text{ partition}}
$$
Our main object of study will be the transition matrix between the above stable bases:
$$
A  : \La_{q_1,q_2} \longrightarrow \La_{q_1,q_2}
$$
$$
A\left( s^{m+\e}_\la \right) = s^{m-\e}_\la
$$
for all partitions $\la$. When $m = \frac ab$ with $\gcd(a,b)=1$, we will relate the matrix $A$ with the representation theory of $\UU$, as in Section \ref{sec:fock}. Specifically, we consider the renormalized stable basis given by:
\begin{equation}
\label{eqn:renormalization}
\ts_\la^{m \pm \e} = s_\la^{m \pm \e} \cdot o_\la^m \cdot \prod^{\la \backslash \text{core }\la \ = }_{= \ R_1 \sqcup ... \sqcup R_k} \ \prod_{i=1}^k \prod_{j=1}^{b-1} q^{\#_j^i}
\end{equation}
where the product is taken over any maximal set of $b-$ribbons contained in $\la$, and:
$$
\#_j^i = \begin{cases} mj - \lfloor mj \rfloor & \quad \text{if the }j-\text{th step in the ribbon }R_i \text{ is to the right} \\
\lceil mj \rceil - mj & \quad \text{if the }j-\text{th step in the ribbon }R_i \text{ is down} \end{cases}
$$

\begin{conjecture}
\label{conj:main}

In the renormalized stable basis, we have:
$$
\ts^{m - \e}_\la = A\left( \ts^{m+\e}_\la \right) = \sum_\mu a^\mu_\la(q) \cdot \ts^{m+\e}_\mu
$$
where $(a_\la^\mu(q))$ is the matrix of the Leclerc-Thibon involution \eqref{eqn:ltinvolution}.

\end{conjecture}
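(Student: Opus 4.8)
The plan is to deduce Conjecture \ref{conj:main} from Conjecture \ref{conj:b}, whose precise form it is (up to the renormalization \eqref{eqn:renormalization}). Granting the action \eqref{eqn:action} of $\UU$ on $K$ under which $K$ becomes the level $1$ Fock space, the statement is essentially formal: by Remark \ref{rem: matrices define bases} the standard and costandard bases are determined completely by how the generators $f_i$ and $V_k$ act, and the transition matrix between them is by definition the Leclerc--Thibon matrix $A_b(q)$ of \eqref{eqn:ltinvolution}. Thus, once Conjecture \ref{conj:b}(3) is known—that $\ts^{m-\e}$ and $\ts^{m+\e}$ are the standard and costandard bases—Conjecture \ref{conj:main} follows, up to the semilinear symmetry $q \leftrightarrow q^{-1}$ which interchanges the two directions of the wall. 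The whole problem is therefore to produce the $\UU$--action and to match the two stable bases with the standard and costandard bases.

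First I would dispose of the Heisenberg subalgebra $\uu \cong \CA^{(m)}$, which is the understood part. By \cite{mnPieri,thesis} the operators of $\CA^{(m)}$ act in the basis $s^m$ by ribbon-tableau formulas of Lascoux--Leclerc--Thibon type, and by the first Theorem of the introduction the passage between slopes of the same denominator is implemented by the stable transition matrix. I would check that after the renormalization \eqref{eqn:renormalization}—whose factors $q^{\#_j^i}$ are tailored to turn the geometric $\BC^*_t$--weight of a ribbon into the combinatorial power $(-q)^{\mp \hh}$—the generators $V_k$ act on $\ts^{m-\e}$ by the standard formula \eqref{eqn: heisenberg generators standard} and on $\ts^{m+\e}$ by the costandard formula \eqref{eqn: heisenberg generators costandard}. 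The opposite signs of the exponents on the two sides of the wall should come from the interchange of attracting and repelling tangent directions recorded by the bounds \eqref{eqn:small1}--\eqref{eqn:small2}, i.e.\ from reading $\mindeg$ versus $\maxdeg$ in the variable $t$.

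The serious obstacle is the $\su$--part, the operators $e_i, f_i$ adding and removing single $i$--nodes, for which no purely Hilbert-scheme construction is presently available; the Heisenberg alone is far too small to act irreducibly once $b \geq 2$, since it only changes the number of boxes in multiples of $b$. I see two routes. The geometric route is to realize $e_i, f_i$ as wall-crossing correspondences at the slope $m = \frac ab$: the one-parameter subgroup of $\BC^*_t$ singled out by a rational slope exhibits the $b$--periodicity of contents, and the difference between the $\mp\e$ chambers should be organized by adding one box of content $\equiv i \bmod b$ at a time, producing the standard and costandard matrices \eqref{eqn:generators standard} and \eqref{eqn:generators costandard} in the two stable bases. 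The algebraic route is to bypass the construction of $\su$ and pin down the transition matrix $A$ directly: the triangularity \eqref{eqn:triangular} together with the degree bounds \eqref{eqn:small1}--\eqref{eqn:small2} should force $A$ to lie in $\BZ[q,q^{-1}]$, to be unitriangular in dominance order, and to preserve each $b$--core block—matching parts (a), (b), (c) of Theorem \ref{th:LT main}—while the symmetry $A(q)A(q^{-1}) = \Id$ should follow from exchanging the chambers under $\e \mapsto -\e$ and $q \mapsto q^{-1}$, and part (d) from the transpose duality $\la \mapsto \la'$ on Hilbert schemes.

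I expect the hard point to be exactly the step that the remark after Example \ref{ex:a2} flags: triangularity of the Leclerc--Thibon matrix is not explained by the naive bar-involution bookkeeping, and correspondingly it is not clear that compatibility with the Heisenberg bar-involution $\overline{V_k v} = V_k \overline{v}$ together with the block and triangularity structure already rigidifies $A = A_b(q)$. Closing this gap is what forces one back to the full $\su$--action, and hence to the geometric route; producing that action from the wall-crossing geometry of the stable basis—so that the single-box operators $e_i, f_i$ act by \eqref{eqn:generators standard}--\eqref{eqn:generators costandard}—is the crux on which a complete proof would rest.
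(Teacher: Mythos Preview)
The statement you are attempting to prove is a \emph{conjecture} in the paper, not a theorem; the paper offers no proof, only computer verification for $n \le 6$ and the equivalence with Conjecture~\ref{conj:sl b}. So there is no ``paper's own proof'' to compare against, and your proposal should be read as a strategy rather than a proof.

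The reduction you describe---deducing Conjecture~\ref{conj:main} from Conjecture~\ref{conj:b} via Remark~\ref{rem: matrices define bases}---is exactly one direction of the equivalence theorem the paper proves between Conjectures~\ref{conj:main} and~\ref{conj:sl b}. That step is correct and matches the paper. But since Conjecture~\ref{conj:b} (equivalently~\ref{conj:sl b}) is itself open, this buys you nothing toward an unconditional proof: the two conjectures stand or fall together.

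You correctly locate the genuine gap: constructing the $\su$--action on $K$ with the prescribed matrices in the two stable bases. Neither of your two routes is carried through. The geometric route (realizing $e_i,f_i$ as wall-crossing correspondences) is a reasonable speculation but is not established anywhere, and the algebraic route (pinning down $A$ from triangularity, integrality, block structure, and the semilinear symmetry $A(q)A(q^{-1})=\Id$) does not work: those properties do not characterize the Leclerc--Thibon matrix uniquely, as you yourself note in your final paragraph. So your proposal is an honest outline of why the problem is hard, not a proof.

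One small correction on bookkeeping: by Theorems~\ref{th: mn Pieri plus} and~\ref{th: mn Pieri minus}, the Heisenberg generators $V_k$ act on $\ts^{m+\e}$ by the standard formula \eqref{eqn: heisenberg generators standard} and on $\ts^{m-\e}$ by the costandard formula \eqref{eqn: heisenberg generators costandard}, which is the opposite of what you wrote. (The paper itself is not entirely consistent in its $\pm\e$ labeling between the introduction and Section~5, so some care is needed here.)
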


\subsection{}

It is clear from the definition that the stable bases are locally constant in the parameter $m$. More precisely, we say that the stable basis for $\H_n$ has a {\bf wall} at $m$ if $s^{m-\e}\neq s^{m+\e}$ for some small $\e>0$.

\begin{proposition}
\label{prop: wall simple}
If $m=\frac ab$ with $\gcd(a,b)=1$ is a wall for $\emph{Hilb}_n$, then the following statements hold:
\begin{itemize}
\item[a)]  $b\le n(n-1)$.
\item[b)] The transition matrix between $s^{m+\e}$ and $s^{m-\e}$ is block-triangular.
Two partitions $\lambda$ and $\mu$ belong to the same block if
$m\cdot (c_\lambda-c_\mu)\in \BZ.$
\end{itemize}
\end{proposition}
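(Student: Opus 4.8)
The plan is to deduce both statements from the degree estimates \eqref{eqn:small1}--\eqref{eqn:small2} together with the uniqueness of the integral stable basis, the key combinatorial input being the identity $c_\lambda=n(\lambda')-n(\lambda)$. First I would record two elementary facts about contents. Since $n(\lambda')=\sum_{\sq\in\lambda}a(\sq)$ is strictly increasing and $n(\lambda)=\sum_{\sq\in\lambda}l(\sq)$ strictly decreasing along the dominance order, the quantity $c_\lambda=n(\lambda')-n(\lambda)$ is \emph{strictly} increasing; in particular $\mu\lhd\lambda$ forces $c_\mu<c_\lambda$. Moreover, because $\gcd(a,b)=1$, we have $m(c_\lambda-c_\mu)=\tfrac ab(c_\lambda-c_\mu)\in\BZ$ if and only if $b\mid(c_\lambda-c_\mu)$, i.e. if and only if $\lambda$ and $\mu$ lie in the same block. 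The crucial observation is then that the two window endpoints $n(\lambda)+m(c_\mu-c_\lambda)$ and $n(\lambda')+|\lambda|+m(c_\mu-c_\lambda)$ appearing in \eqref{eqn:small1}--\eqref{eqn:small2} are \emph{integers} precisely for resonant (same-block) pairs.

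For part (b) I would compare the two bases through their expansions $s^{m\pm\e}_\lambda=\sum_{\mu\unlhd\lambda}\gamma^{\mu,\pm}_\lambda[I_\mu]$. For $\mu\lhd\lambda$ we have $c_\mu-c_\lambda<0$, so passing from slope $m$ to $m\pm\e$ shifts the $t$-window of $\gamma^\mu_\lambda$ up, resp. down, by $\e(c_\lambda-c_\mu)$. When the pair is \emph{non}-resonant the endpoints $L=n(\lambda)+m(c_\mu-c_\lambda)$ and $U=n(\lambda')+|\lambda|+m(c_\mu-c_\lambda)$ are non-integral, so for small $\e$ the attainable integer degrees $[\lceil L\rceil,\lfloor U\rfloor]$ are the \emph{same} for $m+\e$ and $m-\e$; only for resonant pairs do they differ, namely the $m-\e$ range $[L+1,U]$ against the $m+\e$ range $[L,U-1]$. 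I would then run a downward induction on dominance showing that $s^{m-\e}_\lambda$ can be turned into a vector satisfying all of the $m+\e$ axioms by subtracting \emph{only} integral multiples $t^{s}\,s^{m+\e}_\mu$ with $\mu$ in the block of $\lambda$: resonance guarantees that the correcting exponent $s=n(\lambda')-n(\mu')+m(c_\mu-c_\lambda)$ is an integer, while the top $t$-coefficient of $\gamma^\mu_\mu=\prod_{\sq}(q_2^{l(\sq)}-q_1^{a(\sq)+1})$ is a monomial in $q_1,q_2$, hence a unit, so the correction is integral. A short computation shows the subtracted tail occupies degrees $[L+(c_\lambda-c_\mu),\,U]$, so removing the degree-$U$ term keeps the $[I_\mu]$-coordinate inside $[L,U-1]$. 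Since the corrections never touch non-resonant coordinates and, being indexed by partitions in the block of $\lambda$, stay inside that block, uniqueness of the stable basis identifies the resulting vector with $s^{m+\e}_\lambda$; reading the identity backwards gives $s^{m-\e}_\lambda=\sum_{\mu}a^\mu_\lambda\, s^{m+\e}_\mu$ with $a^\mu_\lambda\ne0$ only when $\lambda,\mu$ share a block. This is in fact block-diagonality with respect to the content-mod-$b$ blocks, which a fortiori yields the asserted block-triangularity.

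Part (a) is then a corollary. A wall means $s^{m-\e}\ne s^{m+\e}$, so some off-diagonal $a^\mu_\lambda\ne0$ with $\mu\lhd\lambda$; by (b) such $\mu$ is resonant, whence $b\mid(c_\lambda-c_\mu)$ together with $c_\lambda-c_\mu>0$ gives $b\le c_\lambda-c_\mu$. Since $c_\lambda$ is maximized on $(n)$ and minimized on $(1^n)$, with $c_{(n)}-c_{(1^n)}=2\binom n2=n(n-1)$, we conclude $b\le n(n-1)$.

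I expect the main obstacle to be making the inductive correction in part (b) fully rigorous: one must check that subtracting the resonant multiples $t^s s^{m+\e}_\mu$ keeps \emph{every} coordinate --- including the non-resonant ones already verified to satisfy the $m+\e$ bounds --- inside its prescribed window, which requires a careful tracking of the $O(\e)$ window shifts alongside the integrality afforded by the unit leading coefficients. Equivalently, the heart of the matter is that the Maulik--Okounkov construction of the integral basis is compatible with the content-mod-$b$ decomposition of $K$; the coincidence of non-resonant windows above is what powers this compatibility, but verifying it inductively, step by step, is where the real work lies.
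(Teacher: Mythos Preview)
Your approach is exactly the paper's, only spelled out: the paper derives (a) from (b) via $|c_\lambda|\le\binom n2$ and declares (b) ``immediate from \eqref{eqn:small1} and \eqref{eqn:small2}'', whereas you supply the actual downward induction that makes this immediate.

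Regarding the obstacle you flag, it dissolves once you note the following transitivity: if $\mu\unlhd\lambda$ is resonant and $\nu\unlhd\mu$ is \emph{non}-resonant with $\lambda$, then $\nu$ is also non-resonant with $\mu$ (contents mod $b$). Hence the $(\mu,\nu)$--window is the same for $m\pm\e$, and since your shift $s=n(\lambda')-n(\mu')+m(c_\mu-c_\lambda)$ is an integer, a direct computation gives
\[
s+\Big\lfloor n(\mu')+|\mu|+m(c_\nu-c_\mu)\Big\rfloor=\Big\lfloor n(\lambda')+|\lambda|+m(c_\nu-c_\lambda)\Big\rfloor,
\]
and similarly $s+\lceil n(\mu)+m(c_\nu-c_\mu)\rceil\ge\lceil n(\lambda)+m(c_\nu-c_\lambda)\rceil$ because $n(\lambda')-c_\mu\ge n(\lambda)$ by your strict monotonicity of $c$. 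Thus $t^s\gamma^{\nu,+}_\mu$ lands inside the common $(\lambda,\nu)$--window, and subtracting any $\BZ[q^{\pm1}]$--multiple of $t^s s^{m+\e}_\mu$ keeps every non-resonant coordinate in bounds throughout the induction. The same check with $\nu$ resonant gives max degree at most $U_\nu-1$ and min degree at least $L_\nu+(c_\lambda-c_\mu)\ge L_\nu+b$, so earlier corrections never reintroduce a top term at a later resonant coordinate. With this in hand your induction goes through verbatim and yields block-\emph{diagonality}, hence the asserted block-triangularity.
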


\begin{proof} Since $|c_\la|, |c_\mu| \le \frac{n(n-1)}{2}$, we conclude that: 
$$
b\le c_\lambda-c_\mu\le n(n-1).
$$
which implies (a). Part (b) is immediate from equations \eqref{eqn:small1} and \eqref{eqn:small2}.
\end{proof}

\noindent Conjecture \ref{conj:main} implies stronger constraints on the set of walls than Proposition \ref{prop: wall simple} does, and it also refines the blocks in the the wall-crossing matrices:

\begin{proposition}
\label{prop:wall hard}
Assume that Conjecture \ref{conj:main} holds and $m=\frac ab$ is a wall for $\H_n$, $\gcd(a,b)=1$.
Then the following statements hold:
\begin{itemize}
\item[a)] $b\le n$ 
\item[b)] The transition matrix between $s^{m+\e}$ and $s^{m-\e}$ is block-triangular.
Two partitions $\lambda$ and $\mu$ belong to the same block if they have the same $b$--core.
\end{itemize}
\end{proposition}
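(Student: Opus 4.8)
The plan is to read off both statements from Conjecture \ref{conj:main}, which I am allowed to assume, combined with the structural properties of the Leclerc--Thibon matrix collected in Theorem \ref{th:LT main}. The one preliminary point I would establish is that the renormalization \eqref{eqn:renormalization} rescales each basis vector $s_\la^{m\pm\e}$ by a nonzero scalar $c_\la$ depending only on $\la$ and $m$ (through $o_\la^m$ and the product of powers of $q$ over a maximal set of $b$-ribbons of $\la$), and that \emph{the same} $c_\la$ is used on the $m+\e$ and the $m-\e$ side. Granting this, the transition matrix between the un-renormalized bases is obtained from $(a_\la^\mu(q))$ by conjugation by the invertible diagonal matrix $\mathrm{diag}(c_\la)$; explicitly its $(\la,\mu)$ entry is $\frac{c_\mu}{c_\la}\,a_\la^\mu(q)$, which vanishes precisely when $a_\la^\mu(q)$ does.

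For part (b) I would simply invoke Theorem \ref{th:LT main}(b): $a_\la^\mu(q)=0$ unless $|\la|=|\mu|$, $\mu\unlhd\la$, and $\la,\mu$ have the same $b$-core. Since the diagonal conjugation above leaves the vanishing pattern untouched, the transition matrix between $s^{m+\e}$ and $s^{m-\e}$ connects $\la$ and $\mu$ only when they share a $b$-core, and within each such block it is triangular for the dominance order. Grouping the partitions of $n$ by $b$-core therefore produces exactly the asserted block-triangular shape, refining Proposition \ref{prop: wall simple}(b).

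For part (a) I would use that $m$ is a wall precisely when $s^{m-\e}\neq s^{m+\e}$, i.e. the transition matrix is not the identity. Its diagonal entries all equal $\frac{c_\la}{c_\la}\,a_\la^\la(q)=1$ by Theorem \ref{th:LT main}(c), so non-triviality forces some off-diagonal entry to be nonzero: there exist $\la\neq\mu$ of size $n$ with $a_\la^\mu(q)\neq 0$. By part (b) of that theorem they share a $b$-core, and two distinct partitions of the same size with a common $b$-core must each strictly contain that core, hence each contains at least one $b$-ribbon. A $b$-ribbon consists of $b$ boxes, so $n=|\la|\ge b$.

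The mathematical content is light once the conjecture is in hand; I expect the only genuine subtlety to be the bookkeeping in the preliminary step, namely checking that the renormalization scalars are nonzero and identical for $m\pm\e$, so that passing from $(a_\la^\mu(q))$ to the geometric transition matrix is an honest diagonal conjugation that neither creates nor destroys zero entries. The remaining ingredient is the elementary combinatorial observation that distinct equal-size partitions with a common $b$-core force $n\ge b$.
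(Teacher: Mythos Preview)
Your proof is correct and follows essentially the same route as the paper's own argument: part (b) is read off from Theorem \ref{th:LT main}(b), and part (a) amounts to observing that if two distinct partitions of $n$ share a $b$-core then $n\ge b$, which is the contrapositive of the paper's one-line contradiction (``if $b>n$ every partition of $n$ is its own $b$-core, so all blocks are singletons and the matrix is the identity''). Your extra care in noting that the renormalization \eqref{eqn:renormalization} is a diagonal conjugation preserving the vanishing pattern is a point the paper leaves implicit.
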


\begin{proof}
Part (b) follows from Theorem \ref{th:LT main} (b). 
Suppose for the purpose of contadiction that $b>n$. Then every partition of $n$ is its own $b$--core,
so all blocks are of size 1. Since the transition matrix should have 1's on the diagonal, it is
an identity matrix, and therefore $m=\frac ab$ is not a wall.
\end{proof}

\section{Heisenberg actions}

To prove Conjecture \ref{conj:main}, for each $m=\frac ab$ one needs to present an action of $\UU$ 
on the Fock space such that the matrices of the generators in the renormalized stable bases
$\ts^{m-\e}$ and $\ts^{m+\e}$ have particularly nice form. In this section, we present such an action of the diagonal Heisenberg subalgebra:
$$
\uu \subset \UU
$$
following \cite{mnPieri}. We will use a remarkable algebra $\CA$ over the field $\BQ(q,t)$, which is known by many names:
\begin{itemize}
\item The double shuffle algebra
\item The Hall algebra of an elliptic curve
\item The doubly-deformed $W_{1+\infty}$--algebra
\item The spherical double affine Hecke algebra (DAHA) of type $GL_{\infty}$
\item $U_{q,t}\left(\ddot{{\mathfrak{gl}}_1}\right)$
\end{itemize} 
See \cite{FHHSY,SV,Shuf} for various isomorphisms between different presentations of $\CA$.
It is known that the group $SL(2,\BZ)$ acts on $\CA$ by automorphisms. Furhermore,
there is a natural $q$--Heisenberg subalgebra of $\CA$, which in the DAHA presentation is
generated by symmetric polynomials in $X_i$ and their conjugates. By applying automorphisms
$\gamma\in SL(2,\BZ)$ to this subalgebra, we get new $q$--Heisenberg subalgebras: 
$$
\CA \supset \CA^{(m)} = \BQ(q,t) \left \langle ..., B_{-2}^{(m)}, B_{-1}^{(m)}, B_1^{(m)}, B_2^{(m)},... \right \rangle
$$
labeled by rational numbers $m = a/b$, where $\gamma(1,0)=(b,a)$. We will call $\CA^{(m)}$ the {\bf slope $m$ subalgebra} in $\CA$. The following results relate $\CA^{(m)}$ to slope $m$ stable bases.

\begin{theorem}\cite{FTs,SV,N}
There is an action of $\CA$ on $\Lambda_{q_1,q_2}$, where $q_1 = qt$ and $q_2=qt^{-1}$. 
\end{theorem}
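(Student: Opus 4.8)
The plan is to construct the action directly on $K = \bigoplus_n K_{q,t}(\H_n)$ and transport it along the isomorphism \eqref{eqn:bkr}. The natural source of operators is the colength-one nested Hilbert scheme
$$
\H_{n,n+1} = \{(I' \subset I) : I \in \H_n,\ I' \in \H_{n+1},\ \dim(I/I')=1\},
$$
which is smooth and carries a tautological line bundle $\CL$ with fiber $I/I'$. Writing $p,p'$ for the two projections, I would define raising and lowering operators
$$
e_d = p'_*\big(\CL^d \cdot p^*(-)\big), \qquad f_d = p_*\big(\CL^d \cdot (p')^*(-)\big), \qquad d \in \BZ,
$$
normalized by the $K$-theoretic Euler classes of the relevant obstruction bundles so that they land in the correct summands. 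By localization \eqref{eqn:loc}, their matrix coefficients in the renormalized fixed-point basis $[I_\la]$ can be computed explicitly: they come out as products of arm/leg factors in the spirit of \eqref{eqn:tangent}, multiplied by $\chi_\square^d$, where $\square$ is the box being added or removed.

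Next I would identify the algebra these operators generate with $\CA$. In the elliptic Hall algebra picture, $\CA$ is generated by elements indexed by lattice vectors $(1,d)$, $(-1,d)$ and $(0,k)$ in $\BZ^2$; the generating currents $\sum_d e_d z^d$ and $\sum_d f_d z^d$ should realize the $(\pm 1, d)$ generators, while the Heisenberg generators $B_k = (0,k)$ arise as the residues of the commutators $[e_d, f_{d'}]$ along the diagonal. Since these three families generate all of $\CA$, it remains to verify that the geometric operators satisfy the defining relations.

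The verification of relations is the main obstacle. Each relation — the quadratic shuffle relations among the $e_d$'s, the analogous ones among the $f_d$'s, and the $[e_d,f_{d'}]$ commutation relations — must be established by computing compositions of correspondences. Such a composition is a pull-push over the tower $\H_{n,n+1,n+2}$, and by localization it reduces to a sum over torus-fixed flags $I'' \subset I' \subset I$; the desired identities then become combinatorial identities among products of tangent weights \eqref{eqn:tangent} with insertions of powers of $\chi_\square$. Two points demand care: (a) the excess intersection arising when the two boxes added or removed sit on the same or adjacent diagonals, which is the geometric origin of the wheel (vanishing) conditions in $\CA$; and (b) fixing the normalizing constants so that $[e_d,f_{d'}]$ produces the Heisenberg generators with commutator matching the Macdonald pairing normalization of \eqref{eqn:pair1}.

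Finally, having checked the relations, I would invoke cyclicity of $K$ — generated from the vacuum $K_{q,t}(\H_0) = \BQ(q_1,q_2)$ by the raising operators $e_d$ — to conclude that this is a well-defined action of $\CA$ on $K \cong \Laq$. An alternative, purely combinatorial route avoids the geometry altogether: one defines the operators on $\Laq$ directly by their Pieri-type matrix coefficients, as was done for the Heisenberg subalgebra in \cite{mnPieri}, and verifies the shuffle relations by residue computations. This bypasses the nested Hilbert scheme but requires the same combinatorial identities, so the essential difficulty is unchanged.
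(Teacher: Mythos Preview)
The paper does not prove this theorem: it is stated with attribution to \cite{FTs,SV,N} and used as a black box. There is no argument in the paper to compare against.

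Your sketch is a reasonable outline of how the construction in those references actually goes --- correspondences on the nested Hilbert scheme $\H_{n,n+1}$ produce the generators at lattice points $(\pm 1,d)$, localization gives explicit matrix coefficients in the fixed-point basis, and the relations of $\CA$ are verified either geometrically (excess intersection on $\H_{n,n+1,n+2}$) or combinatorially via the shuffle presentation. That said, as written it is a plan rather than a proof: the ``main obstacle'' you identify, namely the actual verification of the quadratic and cubic relations, is where essentially all of the work lies, and you have not carried any of it out. If the intent is simply to invoke the cited references, then a one-line citation suffices and the sketch is unnecessary; if the intent is to give an independent proof, then the relation check must be done in full, and the normalization issues you flag in (a) and (b) are genuinely delicate.
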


\begin{theorem}\cite{mnPieri}
\label{th: mn Pieri plus}
The action of the slope $m$ subalgebra $\CA^{(m)}$ in the renormalized stable basis $\ts^{m+\e}$ is given by equations \eqref{eqn: heisenberg generators standard}.
\end{theorem}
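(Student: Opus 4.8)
The plan is to realize the generators of $\CA^{(m)}$ as explicit geometric operators on $K$, to compute their matrix coefficients in the fixed-point basis by equivariant localization, and then to use the triangularity and degree-bound characterization \eqref{eqn:triangular}--\eqref{eqn:small2} of the stable basis to convert these coefficients into the ribbon-tableau formulas \eqref{eqn: heisenberg generators standard}. It suffices to treat the raising operators $B^{(m)}_{-k}$ --- equivalently the box-adding generating operators $V_k$ --- since the lowering operators $V_{-k}$ are then handled by the analogous computation (or by adjunction with respect to the Euler pairing \eqref{eqn:euler}), and the Heisenberg relations pin down the rest.

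First I would produce a geometric model for $\CA^{(m)}$. Since $\CA^{(m)}$ is obtained from the natural $q$--Heisenberg subalgebra of $\CA$ by the automorphism $\gamma \in SL(2,\BZ)$ with $\gamma(1,0)=(b,a)$, the shuffle presentation of $\CA$ expresses each $B^{(m)}_{-k}$, and hence each $V_k$, as an explicit symmetric rational function, whose action on $K$ is realized by a correspondence supported on the nested Hilbert scheme of chains $I_\mu \subset I_\la$ with $|\lamu| = kb$. By localization, the matrix coefficient of $V_k$ between the fixed points $\mu$ and $\la$ is a sum over the torus-fixed points of this correspondence, each contributing a ratio of tangent weights read off from \eqref{eqn:tangent}; I would record this ratio as an explicit product over the boxes of $\lamu$.

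The heart of the argument is the combinatorial reduction. After the slope-$m$ specialization of the equivariant parameters $q_1,q_2$ that produces the single Fock-space variable $q$, and after expressing everything in the basis $s^{m+\e}$, the degree inequalities \eqref{eqn:small1}--\eqref{eqn:small2} determine exactly which localization terms survive in the limit $\e \to 0$: I would show these are precisely the horizontal $k$--strips of $b$--ribbons, the support condition arising because the contents modulo $b$ (see Subsection \ref{sub:young}) control which tangent weights degenerate. The renormalization \eqref{eqn:renormalization} is engineered so that the leftover monomial prefactors in $q_1,q_2$ cancel box-by-box against the product of weights, leaving exactly the scalar $(-q)^{-\hh(\lamu)}$ demanded by \eqref{eqn: heisenberg generators standard}.

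The main obstacle is this last step. The raw localization output is a complicated rational function of $q_1,q_2$, and collapsing it to the single statistic $\hh(\lamu)$ --- with the correct sign and with support exactly on ribbon strips --- requires careful bookkeeping of arms, legs, and contents along each ribbon, together with the recombination of the many terms indexed by different intermediate chains $I_\mu \subset \cdots \subset I_\la$ into the one clean coefficient attached to each $b$--ribbon tiling. Verifying that the $t$--degree bounds simultaneously force triangularity with respect to the dominance order \eqref{eqn:dom} is where the real work lies; once the $V_k$ are matched, the commutation with the $\su$--action established in \cite{LLT} completes the identification.
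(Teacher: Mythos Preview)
The paper does not supply its own proof of this theorem: it is quoted directly from \cite{mnPieri} (with a pointer to \cite{thesis}), and no argument appears in the text beyond the citation. There is therefore no in-paper proof against which to compare your proposal.

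That said, your outline is broadly in line with the strategy of \cite{mnPieri}: realize the generators of $\CA^{(m)}$ as explicit shuffle elements, compute their action via correspondences on nested Hilbert schemes and equivariant localization in the fixed-point basis, and then use the defining triangularity and $t$--degree bounds \eqref{eqn:small1}--\eqref{eqn:small2} of the stable basis to identify the resulting matrix coefficients with the ribbon-tableau expressions \eqref{eqn: heisenberg generators standard}. Two points in your sketch should be corrected. First, there is no ``slope-$m$ specialization of the equivariant parameters $q_1,q_2$'': the identity holds for generic $q_1,q_2$, and the fact that the coefficients in the renormalized stable basis depend only on $q$ and not separately on $t$ is a genuine identity of Laurent polynomials, not a limit or specialization. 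Second, your closing sentence about commutation with the $\su$--action is out of place here: Theorem \ref{th: mn Pieri plus} concerns only the Heisenberg subalgebra $\CA^{(m)} \cong \uu$, and the existence of a compatible $\su$--action on $K$ is precisely the content of the still-open Conjecture \ref{conj:sl b}, not an ingredient available for this proof.
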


\noindent It turns out that the action of the slope $m$ subalgebra in the stable basis $\ts^{m-\e}$ can be described in similar terms, by analogy with the proof of \loccit

\begin{theorem} 
\label{th: mn Pieri minus}
The action of the slope $m$ subalgebra $\CA^{(m)}$ in the renormalized stable basis $\ts^{m-\e}$ is given by equations \eqref{eqn: heisenberg generators costandard}, i.e. replacing $q \leftrightarrow q^{-1}$ in Theorem \ref{th: mn Pieri plus}.
\end{theorem}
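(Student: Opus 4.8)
The plan is to mirror the proof of Theorem \ref{th: mn Pieri plus} from \cite{mnPieri}, keeping careful track of the single place where the sign of $\e$ enters. The operators $V_{\pm k}^{(m)}$ generating the slope $m$ Heisenberg subalgebra $\CA^{(m)}$ are intrinsic elements of $\CA$, so their action on $\Laq$ is fixed once and for all; what Theorems \ref{th: mn Pieri plus} and the present one compare are the matrices of this \emph{one} action written in the two bases $\ts^{m+\e}$ and $\ts^{m-\e}$. Concretely, I would realize $V_{\pm k}^{(m)}$ through the nested Hilbert scheme (Hecke) correspondences twisted by the slope-$m$ line bundle, exactly as in \loccit, and then compute its matrix coefficients in the basis $\ts^{m-\e}$ by equivariant localization \eqref{eqn:loc0}--\eqref{eqn:loc}.

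The crux is that the bases $\ts^{m\pm\e}$ differ only in the direction in which the wall at $m$ is crossed. By \eqref{eqn:triangular}--\eqref{eqn:small2}, $s^{m+\e}$ is the integral triangular basis for which the \emph{minimal} $t$--degree bound \eqref{eqn:small1} is strict, whereas $s^{m-\e}$ is the one for which the \emph{maximal} $t$--degree bound \eqref{eqn:small2} is strict. These two extremal normalizations are interchanged by reversing the $\BC^*_t$--flow, and this reversal negates the height statistic $\hh$ that controls the leading contribution in the localization sum. Moreover, the renormalization factors $o_\la^m \prod_{i,j} q^{\#_j^i}$ in \eqref{eqn:renormalization} depend only on $m$ and $\la$ and \emph{not} on the sign of $\e$, so they are literally identical for the two bases and cannot contribute to any discrepancy; the entire effect of $\e \to -\e$ is therefore carried by the extremal-term selection.

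I would then run the matrix-coefficient computation for $\ts^{m-\e}$, selecting at each fixed point the extremal (now maximal-degree) contribution; this reproduces the horizontal $k$--strip-of-$b$--ribbons formula \eqref{eqn: heisenberg generators standard} but with the sign of the exponent of $\hh(\la/\mu)$ reversed, i.e.\ with $(-q)^{-\hh}$ replaced by $(-q)^{+\hh}$, which is exactly \eqref{eqn: heisenberg generators costandard}. Equivalently, the passage $\e \to -\e$ implements the semilinear substitution $q \to q^{-1}$ on every matrix entry, which is precisely the bar-involution relating the standard and costandard Heisenberg actions of Section \ref{sec:fock}. The main obstacle is this extremal-selection step: one must verify that under the reversed degree bounds the dominant term at each stage is governed by $+\hh(\la/\mu)$ rather than $-\hh(\la/\mu)$, and that no stray monomial factors survive the renormalization — in other words, that $\e \to -\e$ produces precisely $q \to q^{-1}$ and nothing more. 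Once this selection lemma is in place, the ribbon formula follows verbatim as in \cite{mnPieri}, and as a consistency check one may confirm that $V_{-k}^{(m)}$ is again the transpose of $V_{k}^{(m)}$ in the new basis, matching the adjointness built into \eqref{eqn: heisenberg generators costandard}.
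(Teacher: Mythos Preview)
Your proposal is correct and matches the paper's own approach: the paper gives no detailed proof, stating only that the result follows ``by analogy with the proof of \loccit'' (i.e., \cite{mnPieri}), and your plan to rerun that argument with the sign of $\e$ flipped---so that the extremal-degree selection in the localization computation picks out $(-q)^{+\hh}$ rather than $(-q)^{-\hh}$---is exactly the intended analogy. One minor caveat: your phrase ``interchanged by reversing the $\BC^*_t$--flow'' is imprecise, since reversing the flow would invert the triangularity order rather than swap $s^{m+\e}$ with $s^{m-\e}$; the correct mechanism, which you in fact use in the rest of the sketch, is simply that perturbing $m$ to $m-\e$ versus $m+\e$ shifts the degree windows \eqref{eqn:small1}--\eqref{eqn:small2} in opposite directions, forcing the opposite extremal term to survive.
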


\noindent Conjecture \ref{conj:main} can be now reformulated in the following way, which is potentially more interesting for geometric applications. 

\begin{conjecture}
\label{conj:sl b}
Given $m=\frac ab$ with $\gcd(a,b)=1$, there is an action of the quantum affine algebra $\su$ on the Fock space, satisfying the following conditions:
\begin{itemize}
\item[a)] It commutes with the action of the slope $m$ Heisenberg subalgebra $\CA^{(m)}$
\item[b)] The action of the creation operators $f_i$ in the renormalized stable basis $\ts^{m+\e}$ is given by \eqref{eqn:generators costandard}.
\item[c)] The action of the creation operators $f_i$ in the renormalized stable basis $\ts^{m-\e}$ is given by \eqref{eqn:generators standard}.
\end{itemize}
\end{conjecture}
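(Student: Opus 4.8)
The plan is to localize the difficulty. Theorems \ref{th: mn Pieri plus} and \ref{th: mn Pieri minus} already determine the action of the Heisenberg $\CA^{(m)} = \uu$ in both renormalized stable bases, via \eqref{eqn: heisenberg generators standard} in $\ts^{m+\e}$ and via \eqref{eqn: heisenberg generators costandard} in $\ts^{m-\e}$. The only missing ingredient is a commuting action of $\su$ whose creation operators $f_i$ have matrices \eqref{eqn:generators costandard} in $\ts^{m+\e}$ and \eqref{eqn:generators standard} in $\ts^{m-\e}$. By Remark \ref{rem: matrices define bases} the joint $(f_i,V_k)$ data rigidifies everything, so the content is the existence of the $\su$-action, with the two prescribed forms being what must then be checked.

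Since the paper records that Conjectures \ref{conj:main} and \ref{conj:sl b} are equivalent, I would first reduce to the matrix statement of Conjecture \ref{conj:main}: the geometric transition matrix $M$ carrying $\ts^{m+\e}_\la$ to $\ts^{m-\e}_\la$ equals the Leclerc--Thibon matrix $A_b(q)$. The strategy is to verify that $M$ obeys the characterizing properties of $A_b(q)$ collected in Theorem \ref{th:LT main}: integrality of entries (from integrality of the stable bases), lower-triangularity in dominance order within the blocks of Proposition \ref{prop: wall simple} (from \eqref{eqn:triangular} together with the degree estimates \eqref{eqn:small1}--\eqref{eqn:small2}), unit diagonal (from the leading coefficient $\gamma_\la^\la$ in \eqref{eqn:triangular}), and the transpose symmetry $a_\la^\mu = a_{\mu'}^{\la'}$ (from the involution $\la \mapsto \la'$ on $\H_n$, implemented by $q_1 \leftrightarrow q_2$). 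The decisive additional input is the semilinear identity $M(q)M(q^{-1}) = \Id$, which I would extract from the symmetry of stable bases under inverting the cocharacter $\BC^*_t$: this operation exchanges the attracting and repelling chambers, hence the two bases $\ts^{m\pm\e}$, while substituting $q \mapsto q^{-1}$.

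These necessary conditions do not by themselves pin down the bar involution, so the genuine work is to show that $M$ additionally fixes all the bar-invariant vectors $f_{i_1}\cdots f_{i_\ell}|\emptyset\rangle$ and $V_{k_1}\cdots V_{k_r}|\emptyset\rangle$, and this is exactly where the $\su$-action must enter. I would therefore construct the $f_i$ independently of the elliptic Hall algebra, via the categorical $\widehat{\fsl}_b$-action on the derived categories of the $\H_n$ supplied by $i$-induction and $i$-restriction for the rational Cherednik algebra at parameter $1/b$ --- the circle of ideas flagged in the abstract --- and then compute its decategorification in the stable basis by a ribbon-tableau argument parallel to that of \loccit, matching \eqref{eqn:generators standard}. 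Running the same computation in the $m-\e$ chamber, where the relevant degree inequality becomes strict on the opposite side, should produce the $q \mapsto q^{-1}$ variant and hence \eqref{eqn:generators costandard}, just as in the passage from Theorem \ref{th: mn Pieri plus} to Theorem \ref{th: mn Pieri minus}.

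The main obstacle is condition (a): that this categorically defined $\widehat{\fsl}_b$ commutes with the geometric slope $m$ Heisenberg $\CA^{(m)} \subset \CA$. The two structures come from entirely different sources --- the elliptic Hall algebra acting by Hecke-type correspondences on one side, a Kac--Moody categorification on the other --- and I expect that reconciling them, equivalently proving that the single matrix $M$ is simultaneously Heisenberg-linear and bar-semilinear, is the crux that the verified cases $n \le 6$ corroborate but that no structural argument yet settles.
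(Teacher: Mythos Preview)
The statement you are addressing is Conjecture \ref{conj:sl b}, which the paper explicitly leaves open; there is no proof in the paper to compare against. What the paper does establish is the equivalence of Conjectures \ref{conj:main} and \ref{conj:sl b}, and you correctly identify this reduction as the natural starting point. Your proposal is therefore not in conflict with the paper but an attempt to go beyond it, and you are candid that the argument does not close: your final paragraph concedes that reconciling the categorical $\widehat{\fsl}_b$ with the geometric $\CA^{(m)}$ is ``the crux \ldots\ that no structural argument yet settles.''

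Within your outline there are also technical gaps worth flagging. First, integrality of the stable bases gives entries in $\BZ[q_1^{\pm 1}, q_2^{\pm 1}]$, not in $\BZ[q^{\pm 1}]$; showing that the renormalized transition matrix depends only on $q$ and not on $t$ is itself part of the content of the conjecture, not a formal consequence of integrality. Second, inverting the one--parameter subgroup $\BC^*_t$ sends $t \mapsto t^{-1}$ and hence swaps $q_1 \leftrightarrow q_2$ while reversing the dominance order; it does not implement $q \mapsto q^{-1}$ on the same ordering, so your derivation of the semilinear identity $M(q)M(q^{-1}) = \Id$ would need a different mechanism. Third, the categorical $\widehat{\fsl}_b$--action you invoke is established on Grothendieck groups only at $q=1$ (Theorem \ref{th:shan}); the filtered lift that would produce the $q$--deformed operators \eqref{eqn:generators standard} in the stable basis is precisely Conjecture \ref{conj: OB}(d), itself open. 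Your proposal thus reduces one open conjecture to another, which is consistent with the state of the art recorded in the paper but is not a proof.
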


\begin{theorem}
Conjectures \ref{conj:main} and \ref{conj:sl b} are equivalent. 
\end{theorem}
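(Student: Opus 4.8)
The plan is to recognize that both conjectures are assertions about one and the same object: a level-one action of $\UU$ on $K$ whose restriction to the diagonal Heisenberg subalgebra $\uu=\CA^{(m)}$ is \emph{already} determined, in both renormalized stable bases, by Theorems \ref{th: mn Pieri plus} and \ref{th: mn Pieri minus}. Thus the only genuinely free datum is the $\su$-part, i.e. the operators $f_i$, equivalently the transition matrix $A$. First I would note that $\ts^{m+\e}_\emptyset=\ts^{m-\e}_\emptyset$ is a cyclic vacuum vector, so that the resulting $\UU$-module is the irreducible Fock space $\Lambda_q$ and Remark \ref{rem: matrices define bases} applies: once the action is fixed, a basis of $K$ is uniquely pinned down by prescribing the matrices of the $f_i$ and of the $V_k$ together with its value on the vacuum.

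For the implication Conjecture \ref{conj:sl b} $\Rightarrow$ Conjecture \ref{conj:main}, I would assemble the full $\UU$-action and let $\beta$ denote the Leclerc--Thibon bar involution of this Fock space, i.e. the unique semilinear involution fixing the vacuum and commuting with every $f_i$ and every $B_{-k}$ (hence every $V_k$). The heart of the argument is to show that $\beta$ interchanges the two stable bases, $\beta(\ts^{m-\e}_\la)=\ts^{m+\e}_\la$. This I would obtain by transporting action formulas across $\beta$: on $\ts^{m-\e}$ the $f_i$ act by the standard formula \eqref{eqn:generators standard} (condition (c)) and the $V_k$ by the costandard formula \eqref{eqn: heisenberg generators costandard} (Theorem \ref{th: mn Pieri minus}); since $\beta$ is semilinear and commutes with both families, these turn into the costandard formula \eqref{eqn:generators costandard} for $f_i$ and the standard formula \eqref{eqn: heisenberg generators standard} for $V_k$, which are exactly the formulas satisfied by $\ts^{m+\e}$ (condition (b) and Theorem \ref{th: mn Pieri plus}). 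As $\beta$ also fixes the common vacuum, the uniqueness of Remark \ref{rem: matrices define bases} forces $\beta(\ts^{m-\e}_\la)=\ts^{m+\e}_\la$. Comparing the resulting matrix of $\beta$ with the defining equation \eqref{eqn:ltinvolution} — and using the characterizing properties of $A_b(q)$ in Theorem \ref{th:LT main} (integrality, unitriangularity, and the same-$b$-core block structure, which the geometric $A$ also enjoys by \eqref{eqn:small1}--\eqref{eqn:small2} and Proposition \ref{prop:wall hard}) — then identifies the transition matrix $A$ with $(a_\la^\mu(q))$, which is Conjecture \ref{conj:main}.

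For the converse, assuming $A=A_b(q)$, I would run the same computation in reverse: transport the standard Fock-space $\su$-action to $K$ through the identification dictated by \eqref{eqn:ltinvolution}, so that $f_i$ acts on $\ts^{m-\e}$ by \eqref{eqn:generators standard}. Condition (c) then holds by construction, condition (b) follows by applying $\beta$ as above, and condition (a) — commutativity with $\CA^{(m)}$ — is inherited from the fact that $\su$ and $\uu$ commute inside $\Lambda_q$, with Theorems \ref{th: mn Pieri plus} and \ref{th: mn Pieri minus} used to match the transported Heisenberg with the geometric one.

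The step I expect to be the main obstacle is the reconciliation, under a \emph{single} isomorphism $K\cong\Lambda_q$, of the Heisenberg data (Theorems \ref{th: mn Pieri plus}, \ref{th: mn Pieri minus}) with the $f_i$ data (Conjecture \ref{conj:sl b}): in each stable basis exactly one of the two families appears in standard form and the other in costandard form, so neither $\ts^{m+\e}$ nor $\ts^{m-\e}$ is literally the standard or costandard basis of $\Lambda_q$, and one cannot simply read off $A=A_b$ from a permutation of basis labels. What makes the argument close is precisely that $\beta$ commutes \emph{simultaneously} with the $f_i$ and with the $V_k$, and that Theorem \ref{th: mn Pieri minus} is obtained from Theorem \ref{th: mn Pieri plus} by $q\leftrightarrow q^{-1}$, i.e. by the action of $\beta$ on coefficients. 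Keeping careful track of these $q\leftrightarrow q^{-1}$ normalizations, and in particular verifying property (a) in the reverse direction, is where the real work lies.
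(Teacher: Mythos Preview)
Your overall strategy coincides with the paper's: both directions hinge on Remark \ref{rem: matrices define bases}, namely that the formulas for the creation operators $f_i$ and $V_k$, together with the vacuum condition, pin down a basis of the irreducible Fock space uniquely. Once the two renormalized stable bases are identified with the standard and costandard bases of $\Lambda_q$, the transition matrix is $A_b(q)$ by the very definition \eqref{eqn:ltinvolution}; conversely, one transports the $\su$-action along this identification and reads off (a)--(c). The paper does this in two sentences per direction, without ever manipulating the bar involution $\beta$ explicitly: your detour through $\beta$ is correct but is just a restatement of the same uniqueness principle.

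The extra layer in your argument --- the ``main obstacle'' of a mixed pairing, standard $f_i$ with costandard $V_k$ on the same basis --- comes from reading Conjecture \ref{conj:sl b}(b),(c) literally. The references to \eqref{eqn:generators standard} and \eqref{eqn:generators costandard} there are swapped relative to Theorems \ref{th: mn Pieri plus}/\ref{th: mn Pieri minus}, to Conjecture \ref{conj:main}, and to the paper's own proof, which treats $\ts^{m+\e}$ as the standard basis and $\ts^{m-\e}$ as the costandard one throughout. With the consistent labeling the obstacle disappears and no separate identification of $A$ via the ``characterizing properties'' of $A_b(q)$ is needed. This matters, because one of the ingredients you invoke for that identification, Proposition \ref{prop:wall hard}, explicitly \emph{assumes} Conjecture \ref{conj:main}; using it inside the implication Conjecture \ref{conj:sl b} $\Rightarrow$ Conjecture \ref{conj:main} is circular and should be removed.
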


\begin{proof}
Assume that Conjecture \ref{conj:sl b} holds. By \ref{conj:sl b} (a) both $\CA^{(m)}$ and $\su$ generate an action of $\UU$ on the Fock space. By Theorems \ref{th: mn Pieri plus} and \ref{th: mn Pieri minus}, the bases $\ts^{m+\e}$ and $\ts^{m-\e}$ are respectively standard and costandard for this action (see Remark \ref{rem: matrices define bases}), so the transition matrix between them coincides with $A_b(q)$. Assume now that Conjecture \ref{conj:main} holds. Define the action of $f_i$ by the matrices \eqref{eqn:generators standard} in the basis
$\ts^{m+\e}$.  By Theorem \ref{th: mn Pieri plus}, this action of $\su$ commutes with the action of $\CA^{(m)}$ and 
altogether one gets  action of $\UU$ on the Fock space, such that $\ts^{m+\e}$ is the corresponding standard basis. 
By Conjecture \ref{conj:main}, $\ts^{m-\e}$ is the costandard basis for this action, so the matrices of $f_i$ in the basis $\ts^{m-\e}$ are given by \eqref{eqn:generators costandard}.
\end{proof}

\section{Relation to rational Cherednik algebras}

\subsection{}

Let $V$ be a finite-dimensional vector space and let $G\subset GL(V)$ be a finite group 
generated by reflections. Let $S\subset G$ be the set of reflections, and let 
$c:S\to \BC$ be a conjugation-invariant function. 

\begin{definition}
The rational Cherednik algebra $H_c(G,V)$ attached to $(G,V)$ is the quotient
of $\BC[W] \ltimes T(V\oplus V^{*})$ by the relations:
$$
[x,x′] = [y,y′] = 0,\  [y,x] = (x,y) - \sum_{s\in S}c(s)(\alpha_s,y)(\alpha^*_s,x)s,
$$
where $x,x'\in V^{*}, y,y'\in V$ and $\alpha_s$ is the equation of the reflecting hyperplane for $s$.
\end{definition}

The category $\CO_c(G,V)$ is defined in \cite{GGOR} as the category of $H_c(G,V)$--modules which are finitely generated over $\BC[V]$ and locally nilpotent under $V$. For a representation $U$ of $G$, let $M_c(U)$ denote the Verma (or standard) module over $H_c(G,V)$ induced from $U$, i.e.: 
$$
M_c(U) = H_c(G,V)\otimes_{\BC[G]\ltimes TV}U
$$
For the remainder of the paper, we will work in type $A$, assuming $G=S_n$, $c(s) = m$ identically, and $V=\BC^{n-1}$. Irreducible representations $V_{\lambda}$ of $S_n$ are labeled by partitions $\lambda\vdash n$, and we denote $M_m(\lambda)=M_m(V_{\lambda})$. The Verma module $M_m(\lambda)$ has a unique irreducible quotient $L_m(\lambda)$. Clearly, $M_m(\lambda)$ and $L_m(\lambda)$ belong to the category $\CO_c(S_n,\BC^{n-1})$. The following results relate the representation theory  of the rational Cherednik algebra to the constuctions of Leclerc and Thibon.

\begin{theorem}
Fix $m=\frac ab$ with $\gcd(a,b)=1$. Then the composition series of $M_m(\lambda)$ can be computed 
in terms of the global canonical basis for $\UU$:
\begin{equation}
\label{eq:can}
[M_m(\lambda)]=\sum_{\mu}d_\la^\mu(1)\cdot [L_m(\mu)],
\end{equation}
where the coefficients $d_\la^\mu$ are defined by \eqref{eq: def d}.
\end{theorem}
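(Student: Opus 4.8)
The plan is to reduce the statement to a known decomposition-number computation and then to feed in the theorem of Ariki (the proof of the Lascoux--Leclerc--Thibon conjecture \cite{LLT}). First I would recall from \cite{GGOR} that $\CO_c(S_n,\BC^{n-1})$, with $c=m$ identically, is a highest weight category whose standard objects are the Verma modules $M_m(\la)$ and whose simple objects are the $L_m(\mu)$. Thus in the Grothendieck group one has $[M_m(\la)]=\sum_{\mu}[M_m(\la):L_m(\mu)]\,[L_m(\mu)]$, where the multiplicity matrix $\big([M_m(\la):L_m(\mu)]\big)$ is lower unitriangular with respect to the dominance order: it has $1$'s on the diagonal and vanishes unless $\mu\unlhd\la$. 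This matches on the nose the triangularity of $(d_\la^\mu)$ recorded after \eqref{eq: def d}, so it suffices to establish the numerical equality $[M_m(\la):L_m(\mu)]=d_\la^\mu(1)$.

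To compute these multiplicities, I would pass through the Hecke side. The KZ functor of \cite{GGOR} is a quotient functor from $\CO_c(S_n,\BC^{n-1})$ to the module category of the Hecke algebra $\mathcal H_q(S_n)$ with $q=e^{2\pi i m}=e^{2\pi i a/b}$, which is a primitive $b$--th root of unity precisely because $\gcd(a,b)=1$. Rouquier's theorem upgrades this to an equivalence of highest weight categories between $\CO_c(S_n,\BC^{n-1})$ and the module category of the $q$--Schur algebra at this root of unity; under the equivalence standard modules correspond to Weyl modules and simples to simples, so the two decomposition matrices coincide. Since the decomposition numbers of the $q$--Schur algebra depend only on the order $b$ of $q$ and not on the chosen primitive root (hence not on $a$), this already explains why the right-hand side of \eqref{eq:can} involves only $b$. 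Finally, Ariki's theorem identifies the decomposition matrix of the $q$--Schur algebra at a primitive $b$--th root of unity with the evaluation at $q=1$ of the transition matrix between the standard basis and the global canonical basis of the level $1$ Fock space for $\su$, i.e. with $d_\la^\mu(1)$ in the notation of \eqref{eq: def d} and Theorem \ref{thm:canonical}. Combining these three inputs yields \eqref{eq:can}.

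The hard part will not be any single computation but the alignment of conventions along this chain. Concretely, one must pin down the normalization relating the Cherednik parameter $c=a/b$ to the quantum characteristic $b$ and to the chosen root of unity, verify that the orientation of the dominance order and the roles of standard versus costandard agree so that the \emph{lower}-triangular canonical basis $G^+(\la)=\sum_\mu d_\la^\mu(q)\stmu$ of \cite{LT1,LT2} matches the composition series of the \emph{standard} modules $M_m(\la)$, and confirm that the $\su$ canonical basis used by Ariki is the same as the one attached to $\UU$ in Theorem \ref{thm:canonical} (the Heisenberg factor $\uu$ acts diagonally enough that it does not alter the $e_i,f_i$--canonical basis). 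A cleaner, more modern route that resolves these bookkeeping issues at once is to invoke the categorification of the Fock space by category $\CO$ in type $A$, under which the classes of standard modules go to the standard basis and those of the indecomposable tilting/projective objects go to the canonical basis; this realizes \eqref{eq:can} as the $q\to 1$ shadow of a graded statement, at the cost of importing a heavier machinery.
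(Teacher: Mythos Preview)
Your strategy is exactly the one in the paper: pass to the $q$--Schur algebra via Rouquier's equivalence (the paper cites \cite{R,Lo}), so that Vermas go to Weyl modules and simples to simples, and then invoke the known identification of the Schur-algebra decomposition matrix with $d_\la^\mu(1)$.

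One correction of attribution, which you yourself flag as the delicate point: the result you need in the last step is not Ariki's theorem but the theorem of Varagnolo--Vasserot \cite{VV}. Ariki's theorem computes decomposition numbers of the \emph{Hecke} algebra (the columns indexed by $b$--regular partitions), whereas here you need the full square decomposition matrix of the \emph{$q$--Schur} algebra, indexed by all partitions of $n$. The KZ functor to the Hecke side is only a quotient functor and kills exactly the simples you would miss; it is the Rouquier equivalence to the Schur side together with \cite{VV} that gives the whole matrix $(d_\la^\mu(1))$. With that substitution your argument is complete and matches the paper's.
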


\begin{proof}
By \cite{R,Lo}, the category $\CO_m(S_n,\BC^{n-1})$ is equivalent to the category of modules over the $q$--Schur algebra $S_q(n)$, where $q=\exp(\pi i/b)$. Under this equivalence the Verma module $M_m(\lambda)$ goes to the Weyl module $W(\lambda)$ and simple modules go to simple modules. \eqref{eq:can} follows from the main theorem of \cite{VV}.
\end{proof}

\begin{theorem}\cite{shan1,shan}
\label{th:shan}
Fix $m=\frac ab$ with $\gcd(a,b)=1$. There exist commuting categorical actions of $\hsl_b$ and of the Heisenberg algebra
on the category:
$$
\CO_m=\bigoplus_n \CO_m(S_n,\BC^{n-1}).
$$
On the level of Grothendieck groups, these actions agree with the $\UU$ action \eqref{eqn:generators standard}  and \eqref{eqn: heisenberg generators standard} at $q=1$.
\end{theorem}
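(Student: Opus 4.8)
The plan is to build the two categorical actions separately from parabolic induction and restriction functors, and then to check that they commute, with the Grothendieck-group statement following by computing the effect of each functor on standard modules. Throughout, the key tool is the pair of exact, biadjoint functors $\res \colon \CO_m(S_n,\BC^{n-1}) \to \CO_m(S_{n-1},\BC^{n-2})$ and $\mathrm{Ind}$ in the opposite direction, constructed by Bezrukavnikov and Etingof; these carry a distinguished natural endomorphism $X$ arising from the Dunkl operators, together with a ``crossing'' transformation $T$ on $\res^{\circ 2}$.

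First I would construct the $\hsl_b$ action, following \cite{shan1}. Decomposing $\res = \bigoplus_{i \in \BZ/b\BZ} E_i$ and $\mathrm{Ind} = \bigoplus_{i \in \BZ/b\BZ} F_i$ into the generalized eigenspaces of $X$ — where the eigenvalue records the content modulo $b$ of the removed or added box — one obtains candidate Chevalley functors. The verification that the pairs $(E_i,F_i)$ define a categorical $\hsl_b$-action in the sense of Rouquier reduces to checking that $X$ and $T$ satisfy the relations of the degenerate affine Hecke algebra, which is exactly where the rational Cherednik algebra structure is used. On the Grothendieck group, the basis $\{[M_m(\la)]\}$ identifies $K_0(\CO_m)$ with the Fock space $\Lambda_q$ specialized at $q=1$, and computing $[E_i M_m(\la)]$ and $[F_i M_m(\la)]$ via the standard filtrations of $\res$ and $\mathrm{Ind}$ recovers \eqref{eqn:generators standard}, with the powers $q^{N^l_i}$ and $q^{N^r_i}$ degenerating to $1$.

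Next I would construct the Heisenberg action, following Shan-Vasserot \cite{shan}. Here one uses the parabolic functors for the embeddings $S_{n-b} \times S_b \hookrightarrow S_n$ and their iterates, composed with projection onto the component of $\CO_m(S_b,\BC^{b-1})$ supported at the trivial $b$-core; the resulting functors categorify the operators adding horizontal strips of $b$-ribbons, and on $K_0$ they reproduce \eqref{eqn: heisenberg generators standard} at $q=1$. The categorical Heisenberg relation — that $[B_k,B_{-k}]$ acts as multiplication by $kb$ — is then verified by a Mackey-type computation of the composite parabolic functors.

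The final and most delicate step is the commutation of the two actions: each $E_i$ and $F_i$ must commute, up to canonical isomorphism, with the Heisenberg functors $B_{\pm k}$. This follows from a Mackey-type decomposition comparing the two composite functors obtained by first removing a single $i$-box and then a $b$-ribbon strip against the reverse order, relative to the two distinct parabolic subgroups involved, together with the compatibility of the endomorphism $X$ with the Heisenberg functors. I expect this commutation to be the main technical obstacle: one must establish a clean Mackey formula for the Bezrukavnikov-Etingof functors at the level of natural transformations, not merely on objects, and the two parabolic structures interact nontrivially through the content statistic that defines the $\hsl_b$ grading.
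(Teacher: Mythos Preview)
The paper does not give its own proof of this theorem: it is stated with attribution to \cite{shan1,shan}, and the only proof-related remark is the sentence immediately following it, that ``the actions of Theorem \ref{th:shan} were constructed using the Bezrukavnikov--Etingof parabolic induction and restriction functors \cite{BE}.'' So there is nothing in the paper to compare your argument against beyond that one sentence, and your proposal is consistent with it: you build both actions out of the BE functors, which is exactly the mechanism the paper points to.

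Your outline of the $\hsl_b$ part is an accurate summary of Shan's construction in \cite{shan1}. Your description of the Heisenberg side is slightly off relative to \cite{shan}: the creation operator $B_{-k}$ is not obtained by parabolic induction followed by ``projection onto the component supported at the trivial $b$--core,'' but rather by inducing from $\CO_m(S_n)\boxtimes \CO_m(S_{kb})$ after inserting the unique finite-dimensional simple $L_m(kb)$ in the second factor (the paper itself alludes to this when it writes $[L_m]=B_{-1}^{(m)}(\mathbf{1})$). This distinction matters for the Mackey-type commutation argument you mention at the end, since the relevant functor is not a direct summand of plain parabolic induction but a composite involving a specific simple object. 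With that correction your sketch matches the cited references.
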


The actions of Theorem \ref{th:shan} were constructed using the Bezrukavnikov--Etingof parabolic induction and restriction functors \cite{BE}.
For example, the class of the unique finite-dimensional representation \cite{BEG} can be computed as:
$$
[L_{m}]=[L_m(b)]=B_{-1}^{(m)}(\mathbf{1}).
$$
Finally, we note that the rational Cherednik algebra and its representations are naturally graded in such a way that $x_i$ have degree 1,
$y_i$ have degree $-1$ and $\BC[S_n]$ have degree 0. The graded characters of standard modules can be computed (up to an overall factor) as
$$
\ch_{t}\ M_{m}(\lambda)=t^{-mc_{\lambda}}(1-t)s_{\lambda}\left[\frac{X}{1-t}\right].
$$

\subsection{}

The algebra $H_m$ is naturally filtered: both $x_i$ and $y_i$ lie in filtration part 1, while $\BC[S_n]$ lies in filtration part 0.
One can easily see that $\gr \ H_m\simeq \BC[S_n]\ltimes \BC[x_1,\ldots x_n,y_1,\ldots y_n]$. If $M$ is an $H_m$--module 
with a compatible filtration, then $\gr \ M$ is a module over $\gr \ H_m$. By the work of Bridgeland-King-Reid \cite{BKR} and Haiman \cite{Haiman1,Haiman2},
such a module corresponds to a class in the derived category of the $\H_n$. If, as in \eqref{eqn:bkr}, we identify the $(\BC^*)^2$--equivariant $K$-theory of $\H_n$ with the space of degree $n$ symmetric polynomials, then the above chain of equivalences sends $M$ to the bigraded Frobenius character of $\gr \ M$. If $M$ is an object in the category $\CO_m$, then
$y_1,...,y_n$ are nilpotent on $M$, so the corresponding complex of sheaves is supported on the subvariety:
$$
\H_n^{\{y=0\}} = \{p\in \H_n: \lim_{t\to 0}t\cdot p\ \text{exists}\}
$$
In fact, this is an example of a more general construction \cite{BPW,BLPW}, which associates a generalization of category $\CO$ to
a {\em conical symplectic resolution} with a chosen line bundle. For the Hilbert scheme, the choice of a line bundle corresponds to the choice of the parameter $m$ in the rational Cherednik algebra. Okounkov and Bezrukavnikov conjectured that the stable basis in $K(\H_n)$ with parameter $m$ consists of the images of associated graded spaces of Verma modules $M_{m}(\lambda)$.

\begin{conjecture}\cite{OB}
\label{conj: OB}
Every representation in category $\CO_m$ admits a filtration such that the following statements hold:
\begin{itemize}
\item[a)]The filtration is compatible with the filtration on $H_m$
\item[b)] The bigraded Frobenius character of $\gr \ M_{m}(\lambda)$ is
equal to $s^m_{\lambda}$. 
\item[c)] This filtration is compatible with the induction/restriction functors in \cite{BE} and the morphisms.
\item[d)] The categorical action of Theorem \ref{th:shan} admits a filtered lift,
and it agrees with the $\UU$ action \eqref{eqn:generators standard}  and \eqref{eqn: heisenberg generators standard},
where $q$ corresponds to the filtration shift.
\item[e)] The filtration on finite-dimensional simples $L_{m}$ agrees with the filtration constructed in \cite{GORS}.
\end{itemize}
\end{conjecture}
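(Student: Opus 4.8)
The plan is to prove the conjecture by transporting the representation theory of $H_m$ to the Hilbert scheme via the Bridgeland--King--Reid--Haiman equivalence \eqref{eqn:bkr}, and then identifying the associated graded of a standard module with the Maulik--Okounkov stable basis element $s_\lambda^m$ through its uniqueness characterization \eqref{eqn:triangular}--\eqref{eqn:small2}. The backbone is part (b), so I treat it first. I would fix on each $M_m(\lambda)$ a good filtration compatible with the order filtration on $H_m$, so that $\gr M_m(\lambda)$ becomes a finitely generated module over $\gr H_m\simeq\BC[S_n]\ltimes\BC[x_1,\ldots,x_n,y_1,\ldots,y_n]$. Since $M_m(\lambda)$ lies in $\CO_m$, the $y_i$ act locally nilpotently, so $\gr M_m(\lambda)$ is supported on the attracting locus $\H_n^{\{y=0\}}$, and under \eqref{eqn:bkr} it determines a class $\sigma_\lambda\in\Laq$. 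Here the Euler grading on $H_m$, in which $x_i,y_i$ have degrees $+1,-1$, supplies the parameter $t$, while the filtration, in which $x_i,y_i$ both have degree $1$, supplies $q$; concretely $x_i$ and $y_i$ acquire the weights $q_1=qt$ and $q_2=qt^{-1}$ of the two coordinate lines of $\BC^2$.

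The heart of the proof is the equality $\sigma_\lambda=s_\lambda^m$. By the Maulik--Okounkov uniqueness statement it suffices to check that $\sigma_\lambda$ is integral, triangular in the fixed-point basis with leading coefficient \eqref{eqn:triangular}, and satisfies \eqref{eqn:small1}--\eqref{eqn:small2}. Triangularity for the dominance order is forced by the support on the attracting locus together with the flow-line structure of Remark \ref{rem:symplectic}: a nonzero $[I_\mu]$--component requires a broken flow line $I_\mu\to I_\lambda$, hence $\mu\unlhd\lambda$. The diagonal coefficient $\gamma_\lambda^\lambda=\prod_{\sq\in\lambda}(q_2^{l(\sq)}-q_1^{a(\sq)+1})$ should arise as the class of the structure sheaf of the attracting manifold at $I_\lambda$, read off from the tangent weights \eqref{eqn:tangent}. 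The delicate input is \eqref{eqn:small1}--\eqref{eqn:small2}: one must show that each coefficient $\gamma_\lambda^\mu$ of $\sigma_\lambda$ is a Laurent polynomial whose $t$--degrees lie between $n(\lambda)+m(c_\mu-c_\lambda)$ and $n(\lambda')+|\lambda|+m(c_\mu-c_\lambda)$. The slope-dependent shift $m(c_\mu-c_\lambda)$ should reflect the twist $t^{-mc_\lambda}$ in the Euler-graded character of $M_m(\lambda)$, and establishing these bounds couples that grading with the new filtration through the localization denominators $[T_\lambda\H_n]$ appearing in \eqref{eqn:loc0}.

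Parts (c)--(e) should follow by functoriality once (b) holds. Part (c) reduces to the fact that the Bezrukavnikov--Etingof induction and restriction functors are defined compatibly with the order filtrations, so they preserve the filtration on objects of $\CO_m$. Part (d) then lifts Shan's categorical $\hsl_b$ and Heisenberg action of Theorem \ref{th:shan} to the filtered level, since that action is assembled from the same functors; its $q=1$ specialization on Grothendieck groups already matches the $\UU$--action by Theorem \ref{th:shan}, and the filtration shift should supply the full $q$--dependence, matching the Heisenberg formulas of Theorem \ref{th: mn Pieri plus} and the $\su$--action \eqref{eqn:generators standard}--\eqref{eqn:cartan generators standard}. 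Part (e) reduces to the finite-dimensional simple $L_m=L_m(b)$, whose class is $B_{-1}^{(m)}(\mathbf{1})$; one computes the induced filtration and compares it with the filtration of \cite{GORS} on the same module.

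The main obstacle is the existence of a filtration realizing the \emph{strict} off-diagonal bounds \eqref{eqn:small1}--\eqref{eqn:small2} that pin down $s_\lambda^m$ uniquely: it is not enough for $\gr M_m(\lambda)$ to be supported on the attracting locus with the correct leading term, since its downstream contributions must fall inside the narrow degree window characterizing the stable basis. The naive order filtration is unlikely to be canonical enough, so one probably needs a geometric construction -- for instance a Bialynicki--Birula or deformation-to-the-normal-cone filtration on $\H_n^{\{y=0\}}$, or a weight filtration on the corresponding perverse sheaf -- together with an identification of this geometric filtration with an algebraically defined one on $M_m(\lambda)$. Proving that the two coincide, and that the geometric filtration reproduces exactly the Maulik--Okounkov degree bounds, is where the essential difficulty lies.
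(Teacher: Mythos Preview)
The statement you are attempting to prove is a \emph{conjecture} in the paper, not a theorem: the paper contains no proof of it, attributing it instead to Bezrukavnikov and Okounkov via private communication and leaving it open. There is therefore nothing in the paper to compare your argument against.

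More to the point, what you have written is not a proof but a strategy, and you yourself flag the gap at the end. The triangularity and leading-term computations are indeed routine consequences of support on the attracting locus, but the entire content of the conjecture lies in the off-diagonal degree bounds \eqref{eqn:small1}--\eqref{eqn:small2}. You correctly observe that the order filtration on $M_m(\lambda)$ has no reason to produce a class satisfying these bounds, and you propose that some unspecified geometric filtration (Bia\l{}ynicki--Birula, deformation to the normal cone, weight filtration) might do the job---but you neither construct such a filtration nor verify the bounds for it. Parts (c)--(e) are likewise not proved: saying that the Bezrukavnikov--Etingof functors ``are defined compatibly with the order filtrations'' and that the filtered lift ``should supply the full $q$--dependence'' is a hope, not an argument, and in any case the filtration whose existence is conjectured is not the order filtration. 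In short, your proposal accurately identifies where the difficulty lies and then stops; this is a reasonable summary of why the statement remains a conjecture, but it is not a proof.
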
  

\noindent Conjecture \ref{conj: OB} (d,e) supersedes our previous conjecture \cite[Conjecture 5.5]{GN}.

\begin{corollary}
If Conjecture \ref{conj: OB} holds, then the bigraded Frobenius character of $L_{m}$ equals $B_{-1}^{(m)}(\mathbf{1})$.
\end{corollary}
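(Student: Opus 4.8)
The plan is to leverage the ungraded identity $[L_m]=B_{-1}^{(m)}(\mathbf{1})$ recorded just after Theorem~\ref{th:shan}, and to promote it to the bigraded level using the filtered refinement supplied by Conjecture~\ref{conj: OB}. First I would recall that, at the categorical level, the Heisenberg functor realizing $B_{-1}^{(m)}$ in the action of Theorem~\ref{th:shan} carries the vacuum object $\mathbf{1}\in\CO_m$ to a module whose $K_0$-class is $[L_m]$; in the categorification of \loccit\ this module is the finite-dimensional simple $L_m=L_m(b)$ itself, and passing to $K_0$ recovers the displayed equality $[L_m]=B_{-1}^{(m)}(\mathbf{1})$ at $q=1$. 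The target of the corollary is the bigraded Frobenius character, which under the Bridgeland--King--Reid/Haiman equivalence \eqref{eqn:bkr} is an element of $\Laq\cong K$, so the task is to identify this element with the Fock-space vector $B_{-1}^{(m)}(\mathbf{1})$ computed via \eqref{eqn: heisenberg generators standard}.

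The core of the argument is to track the filtration. By Conjecture~\ref{conj: OB}(d), the categorical Heisenberg action admits a filtered lift whose effect on bigraded Frobenius characters is precisely the operator $B_{-1}^{(m)}$ of \eqref{eqn: heisenberg generators standard} on $\Laq$, with the formal variable $q$ incarnated as the filtration shift. Since the vacuum $\mathbf{1}$ carries the trivial filtration and bigraded Frobenius character $1$ (the empty partition), applying the filtered functor and invoking Conjecture~\ref{conj: OB}(c) --- compatibility of the filtration with the induction/restriction functors of \cite{BE} that build $B_{-1}^{(m)}$ --- endows $L_m=B_{-1}^{(m)}(\mathbf{1})$ with a canonical filtration whose associated graded has bigraded Frobenius character equal to $B_{-1}^{(m)}$ applied to $1$, i.e.\ the element $B_{-1}^{(m)}(\mathbf{1})\in\Laq$.

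Finally I would use Conjecture~\ref{conj: OB}(e) to pin down the filtration: it asserts that the filtration on the finite-dimensional simple $L_m$ coincides with the one of \cite{GORS}, which is the filtration implicit in the statement of the corollary. Combining the previous paragraph with this identification yields that the bigraded Frobenius character of $L_m$ equals $B_{-1}^{(m)}(\mathbf{1})$, as claimed.

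The main obstacle I anticipate lies not in the ungraded identity, which is already granted, but in the internal consistency of Conjecture~\ref{conj: OB} that the proof must exploit: one must be certain that the filtration transported onto $L_m$ by the filtered Heisenberg functor of part~(d), via the compatibility of part~(c), is the \emph{same} filtration as the \cite{GORS} filtration singled out in part~(e). A priori these two filtrations are produced by different mechanisms --- one categorical-representation-theoretic, the other geometric --- and two filtrations with the same $K_0$-class may have distinct associated-graded bigraded characters. The delicate step is therefore to verify that parts~(c), (d) and~(e) are mutually compatible on $L_m$, so that the single filtration they jointly determine produces exactly $B_{-1}^{(m)}(\mathbf{1})$ rather than a monomial rescaling of it.
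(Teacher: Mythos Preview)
The paper offers no explicit proof of this corollary; it is treated as an immediate consequence of Conjecture~\ref{conj: OB}. Your core argument is correct and matches what the paper intends: the ungraded identity $[L_m]=B_{-1}^{(m)}(\mathbf{1})$ displayed after Theorem~\ref{th:shan} records that the categorical Heisenberg functor sends the vacuum to $L_m$, and part~(d) of the conjecture asserts that passing to bigraded Frobenius characters (with $q$ tracking the filtration shift) intertwines this functor with the Fock-space operator $B_{-1}^{(m)}$. That is already the whole proof.

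Your final paragraph, however, reflects a misreading of the conjecture. Conjecture~\ref{conj: OB} begins ``Every representation in category $\CO_m$ admits a filtration such that the following statements hold''; parts~(a)--(e) are simultaneous constraints on a \emph{single} filtration, not five potentially distinct filtrations produced by five mechanisms. Thus there is no consistency issue to resolve between~(c),~(d) and~(e): the filtration transported by the filtered Heisenberg functor in~(d) and the \cite{GORS} filtration in~(e) are, by hypothesis, one and the same filtration on $L_m$. In particular, part~(e) is not needed to establish the corollary at all; it merely identifies the conjectural filtration with one already known from \cite{GORS}. Your invocation of~(c) is likewise superfluous, since~(d) already asserts that the filtered categorical action matches the Fock-space action on bigraded characters.
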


Based on the above discussion, we formulate a generalization of the Macdonald positivity conjecture that Haiman proved in \cite{Haiman1}.

\begin{conjecture}
\label{conj:positive}
For all positive slopes $m$, the stable basis is Schur-positive:
$$
\ts^{m}_\la=\sum_{\mu} k^m_{\la,\mu} s_{\mu},\qquad k^m_{\la,\mu}\in \BZ_{>0}[[q,t]].
$$
\end{conjecture}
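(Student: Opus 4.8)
The plan is to prove Schur--positivity not by manipulating the defining properties \eqref{eqn:triangular}--\eqref{eqn:small2} directly, but by exhibiting $s^m_\la$ as the bigraded Frobenius character of an honest $S_n$--module. One might first hope to argue inductively in the slope, starting from $m=0$, where $s^0_\la=s_\la$ is manifestly positive, and crossing walls by the transition matrix $A$ of Conjecture \ref{conj:main}; this fails, however, because the Leclerc--Thibon matrix has entries such as $q-q^{-1}$ (Example \ref{ex:a2}), so positivity is destroyed under wall--crossing and cannot be propagated. Instead I would use the representation--theoretic model supplied by Conjecture \ref{conj: OB}(b): the associated graded $\gr M_m(\la)$ of the standard module over the rational Cherednik algebra $H_m$, taken with respect to a filtration compatible with that on $H_m$. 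The entire substance of the argument is then concentrated in establishing this module--theoretic input, since positivity becomes formal once it is available.

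Granting $s^m_\la=\ch_{q,t}\big(\gr M_m(\la)\big)$, the positivity step proceeds as follows. Since $\gr H_m\simeq \BC[S_n]\ltimes \BC[x_1,\dots,x_n,y_1,\dots,y_n]$, the module $\gr M_m(\la)$ is a finitely generated bigraded $\BC[S_n]$--module, $\gr M_m(\la)=\bigoplus_{i,j}W_{i,j}$, with the two gradings coming from the internal grading of $H_m$ and from the filtration degree. Decomposing each finite--dimensional piece $W_{i,j}$ into $S_n$--isotypic components, the multiplicity of the irreducible $V_\mu$ is a non-negative integer, and the Frobenius characteristic of $W_{i,j}$ is $\sum_\mu [W_{i,j}:V_\mu]\,s_\mu$. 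Summing over bidegrees,
$$
\ch_{q,t}\big(\gr M_m(\la)\big)=\sum_\mu s_\mu\Big(\sum_{i,j}q^i t^j\,[W_{i,j}:V_\mu]\Big),
$$
so the coefficient of $s_\mu$ is exactly the bigraded multiplicity space of $V_\mu$, a power series in $q,t$ with non-negative integer coefficients. The passage to the renormalized basis $\ts^m_\la$ of \eqref{eqn:renormalization} multiplies by a $q$--power monomial (from $o_\la^m$ and the ribbon factors $q^{\#_j^i}$), which only shifts degrees and preserves non-negativity, giving the claim.

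The main obstacle is thus to prove Conjecture \ref{conj: OB}(b): to construct the good filtration on $M_m(\la)$ and to identify the bigraded character of $\gr M_m(\la)$ with $s^m_\la$. Via Bridgeland--King--Reid and Haiman \cite{BKR,Haiman1,Haiman2}, $\gr M_m(\la)$ determines a class in $K_{q,t}(\H_n)$, and one must show that this class satisfies the triangularity \eqref{eqn:triangular} together with the degree estimates \eqref{eqn:small1}--\eqref{eqn:small2} that pin down $s^m_\la$ uniquely, with the slope $m$ entering through the Cherednik parameter $c(s)=m$ and the associated line bundle on $\H_n$. As consistency checks I would first treat the two extremes: at $m=0$ the module reduces to $\BC[V]\otimes V_\la$ with character $s_\la$, while the limit $m\to\infty$ should recover the modified Macdonald polynomial and hence Haiman's theorem \cite{Haiman1}, of which this conjecture is the natural generalization. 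Obtaining the degree bounds uniformly in $m$ and controlling the flat degeneration that underlies the filtration is where the genuine difficulty lies; the strict positivity asserted --- that each admissible $s_\mu$ truly occurs --- is a finer matter, requiring a separate non-vanishing argument for the multiplicity spaces.
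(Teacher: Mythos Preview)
The statement is a \emph{conjecture} in the paper, not a theorem; the paper does not prove it. The only justification given is a single sentence immediately following the statement: ``Conjecture \ref{conj:positive} would follow from Conjecture \ref{conj: OB} (b) since $\ts^{m}_\la$ would be a Frobenius character of a bigraded $S_n$--representation.'' Your proposal is exactly this conditional reduction, with the Frobenius--character argument spelled out in detail, and you correctly identify the unresolved input as Conjecture \ref{conj: OB}(b). So your approach and the paper's coincide, and neither constitutes a proof.

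Two remarks on points where you are more careful than the paper. First, Conjecture \ref{conj: OB}(b) concerns the unrenormalized $s^m_\la$, while Conjecture \ref{conj:positive} is stated for $\ts^m_\la$; you address this by observing that the renormalization \eqref{eqn:renormalization} is a monomial in $q$ and so preserves non-negativity. Second, you note that the Frobenius--character argument yields only coefficients in $\BZ_{\geq 0}[[q,t]]$, whereas the paper writes $\BZ_{>0}[[q,t]]$; the strict positivity would indeed require a separate non-vanishing argument (and is quite possibly a typo in the paper, since there is no reason every $s_\mu$ should occur).
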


\noindent Conjecture \ref{conj:positive} would follow from Conjecture \ref{conj: OB} (b) since $\ts^{m}_\la$ would be a Frobenius character of a bigraded $S_n$--representation.

\begin{appendix}
\section{Stable bases for $\H_2$ and $\H_3$}

We list the stable bases for the Hilbert schemes of $n = 2$ and 3 points and certain values of the slope $m$. We note that there is no wall--crossing at integers, so $s^{m+\e} = s^{m-\e}$ if $m\in \BZ$. The following are the matrices that go from the stable basis $s^{m+\e}_\lambda$ to the plethystically modified Schur basis $s^{0}_\lambda$. Specifically, the number indicated in front of each matrix is $m$, and the $\lambda$--th column of each matrix denotes the expansion of the plethystically modified Schur functions $s^{0}_\la$ in the stable basis at slope $m+\e$. We also factor these transition matrices as products of ``wall-crossing" matrices, writing the coordinate of the wall as a subscript. We start with $n=2$:
$$
\frac 12 \mapsto \left( \begin{array}{cc}
1 & 0 \\
q_2 - \frac 1{q_1} & 1 \end{array} \right) \qquad     \frac 32 \mapsto \left( \begin{array}{cc}
1 & 0 \\
q_2 - \frac 1{q_1} + \frac {q_2^2}{q_1}  - \frac {q_2}{q_1^2} & 1 \end{array} \right)=
 \left( \begin{array}{cc}
1 & 0 \\
q_2 - \frac  1{q_1} & 1 \end{array} \right)_{\frac 12} \left( \begin{array}{cc}
1 & 0 \\
\frac{q_2^2}{q_1} - \frac {q_2}{q_1^2} & 1 \end{array} \right)_{\frac 32}
$$
The expansion of the stable bases into usual Schur functions has the form:
$$
s^0_{2} \ = \ \frac{s_{2}}{1-q_2^2}+\frac{s_{1,1} q_2}{1-q_2^2} \quad \qquad \qquad \qquad \qquad \qquad \qquad \qquad \qquad \qquad \qquad s^0_{1,1}\ = \ \frac{s_2 q_2}{1-q_2^2} + \frac{s_{1,1}}{1-q_2^2}
$$
$$
s^{1/2+\e}_{2} = \left[1+\frac{q_2}{q_1 (1-q_2^2)}\right]s_{2}+\frac{s_{1,1}}{q_1 (1-q_2^2)} \qquad \qquad \qquad \qquad \qquad  \qquad s^{1/2+\e}_{1,1}=\frac{s_2 q_2}{1-q_2^2} + \frac{s_{1,1}}{1-q_2^2}
$$
$$
s^{3/2+\e}_{2} = \left[1+\frac{q_2}{q_1}+\frac{q_2^2}{q_1^2(1-q_2^2)}\right]s_{2}+
\left[\frac{1}{q_1}+\frac{q_2}{q_1^2 (1-q_2^2)}\right]s_{1,1} \qquad \qquad s^{3/2+\e}_{1,1}=\frac{s_2q_2}{1-q_2^2} + \frac{s_{1,1}}{1-q_2^2}
$$
Indeed, all the coefficients in the above expressions are nonnegative when expanded in $|q_2| < 1$. Finally, note that the characters of the simple representations of rational Cherednik algebras at $m=1/2$ and at $m=3/2$ can be expressed both in standard and in costandard bases near the corresponding wall:
$$
\ch \ L_{1/2} \ = \ s_{2} \ = \ s^{0}_{2} - s^0_{1,1} q_2 \ = \ s^{1/2+\e}_{2}-\frac {s^{1/2+\e}_{1,1}}{q_1}
$$
$$
\ch \ L_{3/2}=(q_1+q_2)s_{2}+s_{1,1} = s^{1/2+\e}_{2} q_1 - s^{1/2+\e}_{1,1} q_2^2 = s^{3/2+\e}_{2} q_1 -\frac{s^{3/2+\e}_{1,1}q_2}{q_1}
$$
For $n = 3$ we just list the transition matrices between slope $0$ and slope $m+\e$, and also their decomposition into simpler ``wall-crossing'' matrices:
$$
\frac 13 \mapsto \left( \begin{array}{ccc}
1 & 0 & 0 \\
q_2 - \frac 1{q_1} & 1 & 0 \\
\frac 1{q_1^2} - \frac {q_2}{q_1} & q_2 - \frac 1{q_1} & 1 \end{array} \right)
$$
$$
\frac 12 \mapsto \left( \begin{array}{ccc}
1 & 0 & 0 \\
q_2 - \frac 1{q_1} & 1 & 0 \\
\frac 1{q_1^2} - \frac {q_2}{q_1^2} + \frac {q_2^2}{q_1} - \frac {q_2}{q_1}   & q_2 - \frac 1{q_1} & 1 \end{array} \right) = \left( \begin{array}{ccc}
1 & 0 & 0 \\
0 & 1 & 0 \\
\frac{q_2^2}{q_1} - \frac{q_2}{q_1^2} & 0 & 1 \end{array} \right)_{\frac 12}
\left( \begin{array}{ccc}
1 & 0 & 0 \\
q_2 - \frac 1{q_1} & 1 & 0 \\
\frac 1{q_1^2} - \frac {q_2}{q_1} & q_2 - \frac 1{q_1} & 1 \end{array} \right)_{\frac 13} 
$$
$$
\frac 23 \mapsto \left( \begin{array}{ccc}
1 & 0 & 0 \\
q_2 - \frac 1{q_1} + \frac {q_2}{q_1} - \frac 1{q_1^2} & 1 & 0 \\
q_2^3 - \frac {q_2^2}{q_1} + \frac {q_2}{q_1^3} - \frac {q_2^2}{q_1^2} + \frac 1{q_1^2} - \frac {q_2}{q_1} & q_2^2 - \frac {q_2}{q_1} + q_2 - \frac {1}{q_1} & 1 \end{array} \right) =
$$
$$
= \left( \begin{array}{ccc}
1 & 0 & 0 \\
\frac {q_2}{q_1} - \frac 1{q_1^2} & 1 & 0 \\
\frac {q_2}{q_1^3} - \frac {q_2^2}{q_1^2} & q_2^2 - \frac {q_2}{q_1} & 1 \end{array} \right)_{\frac 23} \left( \begin{array}{ccc}
1 & 0 & 0 \\
0 & 1 & 0 \\
\frac{q_2^2}{q_1} - \frac{q_2}{q_1^2} & 0 & 1 \end{array} \right)_{\frac 12} \left( \begin{array}{ccc}
1 & 0 & 0 \\
q_2 - \frac 1{q_1} & 1 & 0 \\
\frac 1{q_1^2} - \frac {q_2}{q_1} & q_2 - \frac 1{q_1} & 1 \end{array} \right)_{\frac 13}
$$

\end{appendix}

\end{document}